\theoremstyle{plain}
\newtheorem{thm}{Theorem}[section]
\newtheorem{defi}[thm]{Definition}
\newtheorem{prop}[thm]{Proposition}
\newtheorem{lem}[thm]{Lemma}
\newtheorem{notation}[thm]{Notation}
\theoremstyle{definition}
\newtheorem{rem}[thm]{Remark}
\newtheorem{exa}[thm]{Example}
\def\nat{\mathbb{N}}
\def\rls{\mathbb{R}}
\def\exrls{(-\infty,\infty]}
\def\wto{\stackrel{w}{\to}}
\def\ol{\overline}
\def\argmin{\operatornamewithlimits{\arg\min}}
\def\dom{\operatorname{dom}}
\def\cldom{\operatorname{\ol{dom}}}
\def\col{\colon}
\def\l{\left}
\def\r{\right}
\def\hs{\mathcal{H}}
\def\as{\mathrel{\mathop:}=}      
\def\fun{\col\hs\to\exrls}
\def\map{\col\hs\to\hs}
\def\geo{\col[0,1]\to\hs}    
\begin{document}
\title[The {L}ie-{T}rotter-{K}ato formula]{A new proof of the {L}ie-{T}rotter-{K}ato formula in~{H}adamard spaces}
\author[M. Ba\v{c}\'ak]{Miroslav Ba\v{c}\'ak}
\date{\today}
\subjclass[2010]{47H20; 58D07}
\thanks{The research leading to these results has received funding from the
 European Research Council under the European Union's Seventh Framework
 Programme (FP7/2007-2013) / ERC grant agreement no 267087.}
\keywords{Gradient flow semigroup, {H}adamard space, {L}ie-{T}rotter-{K}ato formula, resolvent, weak convergence.}
\address{Miroslav Ba\v{c}\'ak, Max Planck Institute, Inselstr. 22, 04 103 Leipzig, Germany}
\email{bacak@mis.mpg.de}

\begin{abstract}
The Lie-Trotter-Kato product formula has been recently extended into Hadamard spaces by [Stojkovic, Adv. Calc. Var., 2012]. The aim of our short note is to give a simpler proof relying upon weak convergence instead of an ultrapower technique.
\end{abstract}

\maketitle



\section{Introduction}
Let $f\fun$ be a convex lower semicontinuous function (lsc) defined on an Hadamard space $(\hs,d).$ For instance $\hs$ can be a Hilbert space and $d$ its natural metric induced by the inner product. For the notation and terminology not explained here, the reader is referred to Section~\ref{sec:pre}. Given $\lambda>0,$ define the \emph{resolvent} of $f$ as
\begin{equation} \label{eq:defres}
J_\lambda(x) \as\argmin_{y\in \hs} \l[f(y)+\frac1{2\lambda}d(x,y)^2\r],\qquad x\in\hs.
\end{equation}
The mapping $J_\lambda\col\hs\to\cldom f$ is well-defined for each $\lambda\in(0,\infty);$ see \cite[Lemma~2]{jost95} and \cite[Theorem~1.8]{mayer}. We also put $J_0(x)\as x$ for each $x\in \hs.$ The \emph{gradient flow semigroup} of $f$ is given as
\begin{equation} \label{eq:defsem}
S_t (x)\as \lim_{n\to\infty} \l(J_{\frac{t}n}\r)^{(n)} (x),\qquad x\in \cldom f,
\end{equation}
for every $t\in[0,\infty).$ The limit in~\eqref{eq:defsem} is uniform with respect to $t$ on bounded subintervals of $[0,\infty)$ and $\l(S_t\r)_{t\geq0}$ is a strongly continuous semigroup of nonexpansive mappings on~$\hs;$ see \cite[Theorem~1.3.13]{jost-ch} and \cite[Theorem~1.13]{mayer}. Note that formula~\eqref{eq:defsem} was in a similar context used already in~\cite[Theorem 8.2]{reichshafrir}.

\begin{rem}
If $\hs$ is a Hilbert space, $u_0\in\cldom f$ and we put $u(t)\as S_t\l(u_0\r),$ for $t\in[0,\infty),$ we obtain a ``classical'' solution to the parabolic problem
\begin{equation*} \label{eq:gradflowproblem}
\dot{u}(t)\in - \partial f\l(u(t)\r),\qquad t\in(0,\infty),
\end{equation*}
with the initial condition $u(0)=u_0.$
\end{rem}

In the present paper, we consider a function $f\fun$ of the form 
\begin{equation} \label{eq:f}
f\as\sum_{j=1}^k f_j,
\end{equation}
where $f_j\fun$ are convex lsc functions, $j=1,\dots,k$ and $k\in\nat.$ This covers a surprisingly large spectrum of problems and has become a classical framework in various applications; see for instance~\cite[Proposition 27.8]{baucom} for the so-called parallel splitting algorithm. In the Hadamard space setting, functions of the form~\eqref{eq:f} naturally emerged in connection with the following example.
\begin{exa} \label{exa:sumofdtothep}
Given a finite number of points $a_1,\dots,a_k\in \hs$ and positive weights $w_1,\dots,w_k$ with $\sum_{j=1}^k w_j=1,$ we define the function
\begin{equation*} f(x)\as \sum_{j=1}^k w_j d\l(x,a_j\r)^p,\qquad x\in\hs,\end{equation*}
where $p\in[1,\infty).$ Then~$f$ is convex continuous and we are especially interested in two important cases:
\begin{enumerate}
 \item If $p=1,$ then $f$ becomes the objective function in the \emph{Fermat-Weber problem} for optimal facility location. If, moreover, all the weights $w_n=\frac1k,$ a minimizer of~$f$ is called a \emph{median} of the points $a_1,\dots,a_k.$
 \item If $p=2,$ then a minimizer of~$f$ is the \emph{barycenter} of the probability measure
\begin{equation*} \mu=\sum_{j=1}^k w_j \delta_{a_j},\end{equation*}
where $\delta_{a_j}$ stands for the Dirac measure at the point $a_j.$ For further details on barycenters, the reader is referred to~\cite[Chapter 3]{jost2} and~\cite{sturm}. If, moreover, all the weights $w_j=\frac1k,$ the (unique) minimizer of~$f$ is called the \emph{Fr\'echet mean} of the points $a_1,\dots,a_k.$
\end{enumerate}
\end{exa}

Both medians and means of points in an Hadamard space are currently a subject of intensive research for their applications in computational biology; see \cite{median-mean,benner,miller} and the references therein. Another area where Fr\'echet means play an important role is the study of so-called consensus algorithms~\cite{grohs}.

Having demonstrated the importance of functions of the form~\eqref{eq:f}, we will now turn to a nonlinear version of the Lie-Trotter-Kato product formula, which states that the gradient flow semigroup of~$f$ can be approximated by the resolvents or semigroups of the individual functions~$f_j,$ with $j=1,\dots,k.$ Such approximation results for semigroups of (much more general) operators defined on a Banach space have become a classical part of functional analysis and go back to the seminal works of Brezis and Pazy~\cite{brezis-pazy70,brezis-pazy72}, Chernoff~\cite{chernoff}, Kato~\cite{kato}, Miyadera and {\^O}haru~\cite{miyadera}, Reich~\cite{reich82,reich83}, Trotter~\cite{trotter,trotter2} and others. For the details, see also the monographs~\cite{brezis-book,chernoff-book,en,pazy}. We shall need some notation.
\begin{notation} \label{not:s}
Let $f\fun$ be a function of the form~\eqref{eq:f} and we of course assume that it is not identically equal to $\infty.$ Its resolvent $J_\lambda$ and semigroup $S_t$ are given by~\eqref{eq:defres} and~\eqref{eq:defsem}, respectively. The resolvent of the function $f_j$ is denoted $J_\lambda^j,$ for each $j=1,\dots,k,$ and likewise the semigroup of~$f_j$ is denoted $S_t^j,$ for each $j=1,\dots,k.$ The symbol $P_j$ will denote the metric projection onto $\cldom f_j,$ where $j=1,\dots,k.$
\end{notation}
If $F\map$ is a mapping, we denote its $n^{\mathrm{th}}$ power, with $n\in\nat,$ by
\begin{equation*}F^{(n)}x\as\l(F\circ\dots\circ F\r) x,\qquad x\in\hs,\end{equation*}
where $F$ appears $n$-times on the right-hand side. As a convention, we set $F^{(0)}x\as x$ for every $x\in\hs.$ Having this notation at hand, we are able to state a nonlinear version of the Lie-Trotter-Kato product formula due to Stojkovic~\cite[Theorems 4.4 and 4.8]{stoj}. 
\begin{thm} \label{thm:stoj}
Let $(\hs,d)$ be an Hadamard space and $f\fun$ be of the form~\eqref{eq:f}. We use Notation~\ref{not:s}. Then, for every $t\in[0,\infty)$ and $x\in\cldom f,$ we have
\begin{subequations}
\begin{align}
S_t(x) & =  \lim_{n\to\infty} \l(J_{\frac{t}{n}}^k\circ\dots\circ J_{\frac{t}{n}}^1\r)^{(n)}(x) , \label{i1:stoj:i} \\
\intertext{and, if moreover there exists $1\leq l\leq k$ such that $\dom f_l$ is locally compact, then}
S_t(x) & =  \lim_{n\to\infty} \l(S_{\frac{t}{n}}^k\circ P_k \dots\circ S_{\frac{t}{n}}^1\circ P_1\r)^{(n)}(x) . \label{i1:stoj:ii}
\end{align}
\end{subequations}
\end{thm}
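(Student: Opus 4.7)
My plan is to split the proof along the two parts of the statement.

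For~\eqref{i1:stoj:i}, I would set $U_\lambda \as J^k_\lambda \circ\cdots\circ J^1_\lambda$ and $y_n \as U_{t/n}^{(n)}(x)$, and first derive an energy estimate by chaining the defining variational inequality of each resolvent. Writing $w_0 \as w,$ $w_j \as J^j_\lambda(w_{j-1}),$ so $w_k=U_\lambda(w),$ the inequality
\[
d(J^j_\lambda w_{j-1},z)^2+d(w_{j-1},w_j)^2\leq d(w_{j-1},z)^2+2\lambda[f_j(z)-f_j(w_j)]
\]
summed over $j=1,\dots,k$ yields
\[
d(U_\lambda w,z)^2+\sum_{j=1}^k d(w_{j-1},w_j)^2\leq d(w,z)^2+2\lambda\bigl[f(z)-f(U_\lambda w)+R_\lambda(w)\bigr],
\]
with a cross-term remainder $R_\lambda(w)=\sum_{j=1}^k[f_j(U_\lambda w)-f_j(w_j)]$. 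Since each $d(w_{j-1},w_j)^2=O(\lambda)$, the remainder is of lower order, which already gives a Fej\'er-type bound on $(y_n)$ with respect to any minimizer of~$f$ (or any reference point).

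The next step is the new, weak-convergence-based identification. Bounded sequences in the Hadamard space $\hs$ have weakly convergent subsequences, so extract $y_{n_\ell}\wto y$. To show $y=S_t(x),$ I would compare $y_n$ with the Moreau-Yosida iterate $z_n\as J_{t/n}^{(n)}(x),$ whose limit is $S_t(x)$ by~\eqref{eq:defsem}. The telescoped estimate above, applied with $z=z_n$ and iterated $n$ times, leads (via a discrete Gronwall argument and the fact that $f_j$ is weakly lsc and convex) to $d(y_n,z_n)\to 0.$ Hence every weak cluster point of $y_n$ coincides with $S_t(x),$ and the whole sequence converges weakly to $S_t(x)$; the Fej\'er estimate upgrades weak convergence to the uniform convergence asserted in~\eqref{i1:stoj:i}.

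For~\eqref{i1:stoj:ii}, I would reduce to~\eqref{i1:stoj:i}. By definition~\eqref{eq:defsem}, for each $j,$
\[
S^j_{t/n}(P_j w)=\lim_{m\to\infty}\bigl(J^j_{t/(nm)}\bigr)^{(m)}(P_j w),
\]
so the finite product $S^k_{t/n}\circ P_k\circ\cdots\circ S^1_{t/n}\circ P_1$ can, up to an arbitrarily small error, be realized as a long chain of individual resolvents whose parameters sum to $t.$ Since the $P_j$ are nonexpansive and $P_j(\cldom f_j)=\cldom f_j,$ the nonexpansive control from~(i) transfers. Iterating $n$ times produces a sequence with the same weak cluster points as $y_n,$ and hence converging weakly to $S_t(x).$ The local compactness of $\dom f_l$ is then used to upgrade the weak convergence to strong convergence by the standard Kadec-Klee type argument: a bounded sequence touching the locally compact set $\dom f_l$ at each stage has strongly convergent subsequences, and each must agree with the already-identified weak limit $S_t(x).$

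The main obstacle is controlling the remainder $R_\lambda(w)$ and propagating $d(y_n,z_n)\to 0$ along the $n$ iterations without any strong compactness in~(i); this is precisely where weak sequential compactness in Hadamard spaces and the weak lower semicontinuity of convex lsc functions replace Stojkovic's ultrapower machinery, and it should be the technically heaviest step of the argument.
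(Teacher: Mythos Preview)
Your plan diverges from the paper at the very first step and, as written, contains a genuine gap.

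The paper does \emph{not} work directly with the iterates $y_n=\bigl(J^k_{t/n}\circ\cdots\circ J^1_{t/n}\bigr)^{(n)}(x)$. Instead it invokes Stojkovic's approximation theorem (Theorem~\ref{thm:approx} in the paper) to reduce both~\eqref{i1:stoj:i} and~\eqref{i1:stoj:ii} to the single-step statement $R_{\lambda,\rho}x\to J_\lambda x$ as $\rho\to0$, where $R_{\lambda,\rho}x$ is the \emph{fixed point} of $y\mapsto \frac{1}{1+\lambda/\rho}x+\frac{\lambda/\rho}{1+\lambda/\rho}F_\rho y$. This fixed-point structure is essential: combined with the CAT(0) inequality~\eqref{eq:cat} applied to the convex combination, the telescoped resolvent estimates collapse to the clean inequality
\[
2\lambda f(v)\ \ge\ 2\lambda\sum_{j=1}^k f_j\bigl(x_j(\rho)\bigr)+d\bigl(x_0(\rho),x\bigr)^2+d\bigl(x_0(\rho),v\bigr)^2-d(x,v)^2,
\]
with \emph{no remainder term}. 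Weak sequential compactness and weak lower semicontinuity of the $f_j$ then identify every weak cluster point of $x_0(\rho_n)$ as $J_\lambda x$, and the same inequality with $v=J_\lambda x$ upgrades to strong convergence.

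Your direct route produces instead the remainder $R_\lambda(w)=\sum_j\bigl[f_j(U_\lambda w)-f_j(w_j)\bigr]$, and this is where the argument breaks. First, $U_\lambda w\in\dom f_k$ but need not lie in $\dom f_j$ for $j<k$, so $R_\lambda(w)$ may be $+\infty$ (or an undefined $\infty-\infty$). Second, the claim $d(w_{j-1},w_j)^2=O(\lambda)$ is unjustified: the resolvent inequality with $z=w_{j-1}$ gives $d(w_{j-1},w_j)^2\le\lambda\bigl[f_j(w_{j-1})-f_j(w_j)\bigr]$, and $f_j(w_{j-1})$ may be $+\infty$. Third, even granting an $O(\lambda)$ bound on the step sizes, passing to $R_\lambda(w)=o(1)$ would require regularity of the $f_j$ (e.g.\ local Lipschitz) that merely convex lsc functions do not enjoy; and an $O(\lambda)$ per-step error iterated $n=t/\lambda$ times sums to $O(t)$, not $o(1)$, so the ``discrete Gronwall'' step cannot close as stated.

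For~\eqref{i1:stoj:ii} the paper again goes through $R_{\lambda,\rho}$, now with $F_\rho=S^k_\rho\circ P_k\circ\cdots\circ S^1_\rho\circ P_1$, replaces~\eqref{eq:resestim} by the EVI~\eqref{eq:evi}, and uses local compactness of $\dom f_l$ to extract a \emph{strong} cluster point $z$, after which~\eqref{eq:pyth} and careful sandwich estimates force $z\in\cldom f$ and $x_j(\rho_{n_p})\to z$ for all $j$. Your proposed reduction of~\eqref{i1:stoj:ii} to~\eqref{i1:stoj:i} by replacing each $S^j_{t/n}$ with $(J^j_{t/(nm)})^{(m)}$ introduces an unjustified interchange of limits in $n$ and $m$, and the projections $P_j$ interspersed in the product do not simply disappear into a long chain of resolvents.
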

The goal of the present paper is to reprove this theorem. In the proof of~\eqref{i1:stoj:i}, we employ weak convergence instead of an ultralimit technique used in~\cite[Theorems 4.4]{stoj}, which in our opinion simplifies the original proof. Part~\eqref{i1:stoj:ii} was in~\cite[Theorem 4.8]{stoj} proved with the help of~\cite[Lemma 4.3]{stoj}, but since this lemma holds only for $z\in\cldom f$ and we do not know apriori that $x_\lambda\in\cldom f,$ a more careful argument is needed to fill this gap. The question whether~\eqref{i1:stoj:ii} holds even without the additional local compactness assumption is left open. We also note that the proof of Theorem~\ref{thm:stoj} relies upon \cite[Theorem 3.13]{stoj} and one should therefore obtain \emph{uniform} convergence of both~\eqref{i1:stoj:i} and~\eqref{i1:stoj:ii} with respect to $t$ on each bounded subinterval of $[0,\infty).$ We were however unable to follow the argument leading to the \emph{uniform} convergence in \cite[Theorem 3.13]{stoj} (more precisely in \cite[Theorem 3.12]{stoj}).

To demonstrate that the gradient flow theory in Hadamard spaces applies in various situations, we now present a number of natural examples of convex lsc functions on an~Hadamard space $(\hs,d).$
\begin{exa}[Indicator functions] \label{exa:indicator}
Let $K\subset \hs$ be a convex set. Define the~\emph{indicator function} of $K$ by
\begin{equation*} \iota_K(x)\as\l\{
\begin{array}{ll} 0, & \text{if } x\in K,  \\ \infty,  & \text{if } x\notin K. \end{array} \r. \end{equation*}
Then $\iota_K$ is a convex function and it is lsc if and only if $K$ is closed.
\end{exa}

\begin{exa}[Distance functions] \label{exa:dist}
Given a point $x_0\in\hs,$ the function
\begin{equation}\label{eq:dist} x\mapsto d\l(x,x_0\r),  \qquad x\in \hs,\end{equation}
is convex and continuous. The function $d\l(\cdot,x_0\r)^p$ for $p>1$ is strictly convex. More generally, the \emph{distance function} to a closed convex subset $C\subset \hs,$ defined by
\begin{equation*}d_C(x)\as \inf_{c\in C} d(x,c),  \qquad x\in \hs,\end{equation*}
is convex and $1$-Lipschitz~\cite[p.178]{bh}.
\end{exa}
\begin{exa}[Displacement functions]
Let $T\map$ be an isometry. The \emph{displacement function} of $T$ is the function $\delta_T\col\hs\to[0,\infty)$ defined by
\begin{equation*}\delta_T(x)\as d(x,Tx),\qquad x\in  \hs.\end{equation*}
It is convex and Lipschitz~\cite[p.229]{bh}.
\end{exa}
\begin{exa}[Busemann functions] \label{exa:busemann}
Let $c\col[0,\infty)\to\hs$ be a geodesic ray. The function $b_c\col\hs\to\rls$ defined by
\begin{equation*} b_c(x)\as \lim_{t\to\infty} \l[d\l(x,c(t) \r) -t \r],  \qquad x\in\hs,\end{equation*}
is called the \emph{Busemann function} associated to the ray $c.$ Busemann functions are convex and $1$-Lipschitz. Concrete examples of Busemann functions are given in \cite[p.~273]{bh}. Another explicit example of a Busemann function in the Hadamard space of positive definite $n\times n$ matrices with real entries can be found in~\cite[Proposition~10.69]{bh}. The sublevel sets of Busemann functions are called \emph{horoballs} and carry a lot of information about the geometry of the space in question; see \cite{bh} and the references therein.
\end{exa}
\begin{exa}[Energy functional]
The energy functional is another important instance of a convex function on an Hadamard space \cite{jost94,jost95,jost97,ks}. Indeed, the energy functional is convex and lsc on a suitable Hadamard space of $\mathcal{L}^2$-mappings. Minimizers of the energy functional are called \emph{harmonic maps} and are important in both geometry and analysis. For a probabilistic approach to harmonic maps in Hadamard spaces, see~\cite{sturm-markov1,sturm-markov2,sturm-semigr}.
\end{exa}

We shall finish this Introduction by recalling a brief development of the theory of gradient flows in Hadamard spaces, which has recently attracted considerable interest. It started independently by the work of Jost~\cite{jost-ch} and Mayer~\cite{mayer}, when the existence of the gradient flow semigroup was established. The study of the relationship with the Mosco and $\Gamma$- convergences, initiated already in~\cite{jost-ch}, was treated in greater detail in~\cite{semigroups,japan}. In~\cite{ppa}, the author describes large time behavior of the gradient flow as well as its discrete version called the proximal point algorithm. As already mentioned above, the Lie-Trotter-Kato product formula was proved in~\cite{stoj}. There have also been many related results in some special instances of Hadamard spaces, namely, in manifolds of nonpositive sectional curvature (\cite{sevilla,sevilla2,papa}) and the Hilbert ball (\cite{kop,kr2010,reichshoikhet} and the references therein). On the other hand this theory can be 
partially extended into more general metric spaces and plays an important role in optimal transport theory, PDEs and probability~\cite{ambrosio}. For another viewpoint, see~\cite{genaro}.

\vspace{12pt}

\paragraph{Acknowledgments:} I would like to express my gratitude to the referee for his comments and suggestions.



\section{Preliminaries} \label{sec:pre}
We first recall basic notation and facts concerning Hadamard spaces. For further details on the subject, the reader is referred to \cite{bh}. 

\subsection{Hadamard spaces}
If a geodesic metric space $(X,d)$ satisfies the inequality 
\begin{equation} \label{eq:cat}
 d\l(x,\gamma(t)\r)^2\leq (1-t)d\l(x,\gamma(0)\r)^2+td\l(x,\gamma(1)\r)^2-t(1-t)d\l(\gamma(0),\gamma(1)\r)^2,
\end{equation}
for any $x\in X,$ any geodesic $\gamma\col[0,1]\to X$ and any $t\in[0,1],$ we say it has nonpositive curvature (in the sense of Alexandrov), or that it is a CAT(0) space. A~complete CAT(0) space is called an \emph{Hadamard space.}

The class of Hadamard spaces includes Hilbert spaces, $\rls$-trees, Euclidean Bruhat-Tits buildings, classical hyperbolic spaces, complete simply connected Riemannian manifolds of nonpositive sectional curvature, the Hilbert ball, CAT(0) complexes and other important spaces included in none of the above classes~\cite{bh}.

Let $(\hs,d)$ be an Hadamard space. Having two points $x,y\in \hs,$ we denote the geodesic segment from $x$ to $y$ by $[x,y].$ We usually do not distinguish between a~geodesic and its geodesic segment, as no confusion can arise. For a point $z\in[x,y],$ we write $z=(1-t)x+ty,$ where $t=\frac{d(x,z)}{d(x,y)}.$ 

For a function $f\fun,$ we denote $\dom f\as\l\{x\in \hs\col f(x)<\infty\r\}.$ If $\dom f\neq\emptyset,$ we say that $f$ is \emph{proper.} To avoid trivial situations we often assume this property without mentioning it explicitly. As usual, the symbol $\cldom f$ stands for the closure of $\dom f.$ A point $x\in\hs$ is called a \emph{minimizer} of $f$ if $f(x)=\inf_\hs f.$

\subsection{Convex sets and functions on Hadamard spaces} \label{subsec:prelim} Recall that a set $C\subset \hs$ is \emph{convex} if $x,y\in C$ implies $[x,y]\subset C.$ A~function $f\fun$ is \emph{convex} provided $f\circ\gamma\col[0,1]\to\exrls$ is convex for each geodesic $\gamma\geo.$ Note that the distance function $d_C$ is convex and continuous; see Example~\ref{exa:dist}.
\begin{prop}
Let $(\hs,d)$ be an Hadamard space and $C\subset \hs$ be a closed convex set. Then:
\begin{enumerate}
\item For every $x\in \hs,$ there exists a unique point $P_C(x)\in C$ such that
\begin{equation*}d\l(x,P_C(x)\r)=d_C(x).\end{equation*}
\item If $x\in\hs$ and $y\in C,$ then
\begin{equation} \label{eq:pyth}
d(x,y)^2\geq d\l(x,P_C(x)\r)^2+d\l(y,P_C(x)\r)^2
\end{equation}
\item The mapping $P_C\col\hs\to C$ is nonexpansive and is called the \emph{metric projection} onto~$C.$
\end{enumerate}
\label{prop:proj}
\end{prop}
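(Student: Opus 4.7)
The plan is to handle the three parts in order, each resting on the CAT(0) inequality~\eqref{eq:cat}.

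For (i), I would pick a minimizing sequence $(y_n)\subset C$ with $d(x,y_n)\to d_C(x)$ and apply~\eqref{eq:cat} with $t=\tfrac12$, base point $x$, and the geodesic $[y_n,y_m]$ (which lies in $C$ by convexity). The resulting bound
\[
d(x, m_{n,m})^2 \leq \tfrac12\, d(x, y_n)^2 + \tfrac12\, d(x, y_m)^2 - \tfrac14\, d(y_n, y_m)^2,
\]
combined with $d(x, m_{n,m})\geq d_C(x)$ (since the midpoint $m_{n,m}$ belongs to $C$), forces $d(y_n, y_m)\to 0$. Completeness of $\hs$ and closedness of $C$ deliver $P_C(x)\in C$; applying the same computation to any two hypothetical minimizers yields uniqueness.

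For (ii), set $z\as P_C(x)$ and fix $y\in C$. The geodesic $\gamma(t)\as(1-t)z+ty$ stays in $C$ by convexity, so minimality gives $d(x,\gamma(t))\geq d(x,z)$ for every $t\in[0,1]$. Inserting this lower bound into~\eqref{eq:cat} applied along $\gamma$ with base point $x$, then rearranging, dividing by $t>0$, and sending $t\to 0^+$, yields~\eqref{eq:pyth}.

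For (iii), set $z_i\as P_C(x_i)$ and apply~\eqref{eq:pyth} first to $(x,y)=(x_1,z_2)$ and then to $(x,y)=(x_2,z_1)$. Adding the two resulting inequalities produces
\[
d(x_1,z_2)^2 + d(x_2,z_1)^2 \ \geq\ d(x_1,z_1)^2 + d(x_2,z_2)^2 + 2\,d(z_1,z_2)^2.
\]
I would then invoke the CAT(0) four-point (Reshetnyak) inequality
\[
d(x_1,z_2)^2 + d(x_2,z_1)^2 \ \leq\ d(x_1,z_1)^2 + d(x_2,z_2)^2 + 2\,d(x_1,x_2)\,d(z_1,z_2),
\]
so that comparing the two bounds gives $2\,d(z_1,z_2)^2\leq 2\,d(x_1,x_2)\,d(z_1,z_2)$, hence $d(z_1,z_2)\leq d(x_1,x_2)$.

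The main obstacle is (iii): the four-point inequality above is a consequence of~\eqref{eq:cat} (via Reshetnyak's subembedding theorem, or direct comparison with a Euclidean quadrilateral), but it is not a one-line manipulation of~\eqref{eq:cat}. An equivalent route avoiding it is to observe that~\eqref{eq:pyth} forces the Alexandrov angle at $z_1$ between $[z_1,x_1]$ and $[z_1,z_2]$ (and symmetrically at $z_2$) to be $\geq\pi/2$, and then to compare the geodesic quadrilateral $x_1z_1z_2x_2$ with its Euclidean counterpart having the same side lengths and these two non-acute angles.
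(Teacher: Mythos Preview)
Your argument is correct in all three parts. The paper itself does not prove this proposition; it simply cites \cite[Proposition~2.4, p.~176]{bh}, so there is no ``paper's own proof'' to compare against beyond that reference. Your treatment of (i) and (ii) is exactly the standard one found in Bridson--Haefliger: the midpoint/Cauchy argument for existence and uniqueness, and the first-variation use of~\eqref{eq:cat} along $[P_C(x),y]$ for the Pythagorean-type inequality. For (iii), the reference argues via comparison angles (your alternative route: the angle at each $z_i$ between $[z_i,x_i]$ and $[z_1,z_2]$ is at least $\pi/2$, then a Euclidean comparison of the quadrilateral), whereas your primary route uses the four-point inequality; both are valid, and you are right that the four-point inequality requires Reshetnyak's subembedding rather than a single application of~\eqref{eq:cat}. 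Either way the conclusion is sound.
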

\begin{proof} See \cite[Proposition~2.4, p.176]{bh}. \end{proof}

The following result comes from~\cite[Lemma 2.2]{gelander}. Note that the Hilbert ball version appeared in~\cite[Theorem 18.1]{goebelreich}. We include its short proof for the readers' convenience. 
\begin{prop}\label{prop:intersectprop}
 Let $(\hs,d)$ be an Hadamard space. If $\l(C_\alpha\r)_{\alpha\in I}$ is a nonincreasing family of bounded closed convex sets in $\hs.$ Then $\bigcap_{\alpha\in I} C_\alpha\neq\emptyset.$
\end{prop}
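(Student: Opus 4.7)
The plan is to exhibit an explicit point of $\bigcap_{\alpha\in I}C_\alpha$ by a metric-projection argument, relying on the Pythagorean-type inequality~\eqref{eq:pyth} of Proposition~\ref{prop:proj}. Fix any base index $\alpha_0\in I$ and any reference point $x_0\in\hs$; for concreteness one may take $x_0\in C_{\alpha_0}$, so that $d(x_0,y)$ is uniformly bounded for $y\in C_{\alpha_0}$ by the (finite) diameter of $C_{\alpha_0}$. For each $\alpha\geq\alpha_0$ set
\[
 y_\alpha\as P_{C_\alpha}(x_0),\qquad d_\alpha\as d(x_0,y_\alpha)=d_{C_\alpha}(x_0),
\]
which are well defined and unique by Proposition~\ref{prop:proj}(i). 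Since the family is nonincreasing, $\alpha\leq\beta$ gives $C_\beta\subseteq C_\alpha$, hence $d_\alpha\leq d_\beta$; and $(d_\alpha)$ is bounded above (by the diameter of $C_{\alpha_0}$). A bounded monotone net of reals converges, so $d_\alpha\to d^*$ for some $d^*\in[0,\infty)$.

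The next step is to upgrade this to Cauchyness of $(y_\alpha)$. For $\alpha\leq\beta$ one has $y_\beta\in C_\beta\subseteq C_\alpha$, so applying~\eqref{eq:pyth} with $C=C_\alpha$, $x=x_0$, $y=y_\beta$ gives
\[
 d_\beta^2=d(x_0,y_\beta)^2\;\geq\;d(x_0,y_\alpha)^2+d(y_\alpha,y_\beta)^2=d_\alpha^2+d(y_\alpha,y_\beta)^2,
\]
i.e.\ $d(y_\alpha,y_\beta)^2\leq d_\beta^2-d_\alpha^2$. Since $d_\alpha^2\to d^{*2}$, the net $(y_\alpha)_{\alpha\geq\alpha_0}$ is Cauchy in $(\hs,d)$; by completeness it converges to some $y^*\in\hs$.

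It remains to check that $y^*$ lies in every $C_\alpha$. Fix $\alpha\in I$; for every $\beta$ with $\beta\geq\alpha$ and $\beta\geq\alpha_0$ we have $y_\beta\in C_\beta\subseteq C_\alpha$, and passing to the limit along this tail, together with closedness of $C_\alpha$, yields $y^*\in C_\alpha$. For indices $\alpha$ preceding $\alpha_0$ the containment is automatic, since then $C_\alpha\supseteq C_{\alpha_0}$, which already contains the entire Cauchy tail. Consequently $y^*\in\bigcap_{\alpha\in I}C_\alpha$.

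The only real subtlety is having to work with a net rather than a sequence when $I$ is uncountable, which is routine once one recalls that a bounded monotone net in $\rls$ converges and that Cauchy nets in complete metric spaces have limits. Otherwise the argument is exactly what one would do in Hilbert space, with~\eqref{eq:pyth} playing the familiar role of the non-expansiveness of the metric projection.
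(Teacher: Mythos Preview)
Your proof is correct and follows essentially the same route as the paper's: project a fixed point onto each $C_\alpha$, show the net of projections is Cauchy via the convergence of the distances, and pass to the limit using closedness. The only cosmetic difference is that you extract the Cauchy estimate from the Pythagorean inequality~\eqref{eq:pyth}, while the paper applies the CAT(0) midpoint inequality~\eqref{eq:cat} to $\frac12 x_\alpha+\frac12 x_\beta$; both yield the same control.
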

\begin{proof}
Choose $x\in\hs$ and denote its projection onto $C_\alpha$ by $x_\alpha\as P_{C_\alpha}(x).$ Then $\l(d\l(x,x_\alpha\r)\r)_\alpha$ is a nondecreasing net of nonnegative numbers and hence has a limit~$l.$ If $l=0,$ then $x\in\cap_\alpha C_\alpha.$ If $l>0,$ then we claim that $\l(x_\alpha\r)$ is Cauchy. Indeed, denote $x_{\alpha\beta}\as\frac12x_\alpha +\frac12x_\beta$ and apply~\eqref{eq:cat} with $t\as\frac12$ to obtain
\begin{equation*} d(x,x_{\alpha\beta})^2\leq\frac12 d(x,x_\alpha)^2+\frac12 d(x,x_\beta)^2-\frac14d(x_\alpha,x_\beta)^2,\end{equation*}
which implies that $(x_\alpha)$ is Cauchy. The limit point clearly lies in $\bigcap_\alpha C_\alpha.$
\end{proof}
As a consequence of Proposition~\ref{prop:intersectprop} we obtain that convex lsc functions are locally bounded.
\begin{lem}\label{lem:convexbnd}
 Let $(\hs,d)$ be an Hadamard space and $f\fun$ be a convex lsc function. Then $f$ is bounded from below on bounded sets.
\end{lem}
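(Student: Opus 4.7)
The plan is to argue by contradiction using Proposition~\ref{prop:intersectprop}. Suppose $f$ fails to be bounded below on some bounded set $B\subset\hs$. Fix a closed ball $\overline{B}(x_0,r)$ containing $B$, and choose a sequence $(x_n)\subset B$ with $f(x_n)\to-\infty$. Passing to a subsequence, I may assume the values $a_n\as f(x_n)$ are strictly decreasing to $-\infty$.

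Next I define, for each $n\in\nat$,
\begin{equation*}
C_n \as \l\{x\in\hs \col f(x)\leq a_n\r\} \cap \overline{B}(x_0,r).
\end{equation*}
Each $C_n$ is convex, being the intersection of a sublevel set of the convex function $f$ with the convex ball $\overline{B}(x_0,r)$. Each $C_n$ is closed since $f$ is lsc and the ball is closed, and each $C_n$ is bounded since it is contained in $\overline{B}(x_0,r)$. Moreover $x_n\in C_n$, so the sets are nonempty, and since $a_{n+1}<a_n$ the family $(C_n)$ is nonincreasing.

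By Proposition~\ref{prop:intersectprop}, there exists $z\in\bigcap_{n\in\nat} C_n$. Then $f(z)\leq a_n$ for every $n\in\nat$, and letting $n\to\infty$ gives $f(z)=-\infty$. This contradicts the fact that $f$ takes values in $\exrls=(-\infty,\infty]$, completing the proof.

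The argument is essentially a one-shot application of Proposition~\ref{prop:intersectprop}, so I do not anticipate any real obstacle; the only point that needs care is ensuring that the sets $C_n$ are genuinely bounded, which is why one must intersect the sublevel sets with a fixed ball containing the witnessing sequence rather than working with the (possibly unbounded) sublevel sets directly.
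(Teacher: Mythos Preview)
Your proof is correct and follows essentially the same route as the paper: both argue by contradiction, intersect sublevel sets of $f$ with a fixed bounded closed convex set to obtain a nonincreasing family of nonempty bounded closed convex sets, and then invoke Proposition~\ref{prop:intersectprop} to produce a point where $f$ would equal $-\infty$. The only cosmetic differences are that the paper replaces $B$ by an arbitrary closed convex superset and uses the integer thresholds $-N$ rather than the actual values $a_n=f(x_n)$.
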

\begin{proof}
Let $C\subset \hs$ be bounded and without loss of generality assume that $C$ is closed convex. If $\inf_C f=-\infty,$ then the sets $S_N\as\l\{ x\in C\col f(x)\leq -N\r\}$ for $N\in\nat$ are all nonempty. Since all $S_N$ are closed convex and bounded, Proposition~\ref{prop:intersectprop} yields a point $z\in \bigcap_{N\in\nat}S_N.$ Clearly $f(z)=-\infty,$ which is not possible.
\end{proof}
The following lemma says that convex functions do not decay too fast. The proof mimics~\cite[Lemma 1.5]{attouch79}.
\begin{lem} \label{lem:growth}
 Let $(\hs,d)$ be an Hadamard and $f\fun$ be a convex lsc function. For each $x_0\in\hs$ there exist constants $\alpha,\beta\in\rls$ such that
\begin{equation*} f(x)\geq \alpha+\beta d\l(x,x_0\r),\end{equation*}
for every $x\in\hs.$
\end{lem}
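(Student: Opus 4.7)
The plan is to first produce an affine lower bound $f(x)\geq\alpha_0+\beta_0 d(y_0,x)$ centred at some point $y_0\in\dom f$, and then to shift it to the arbitrary base point $x_0$ by the triangle inequality. If $f\equiv\infty$ there is nothing to prove, so assume $\dom f\neq\emptyset$ and fix any $y_0\in\dom f$.

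For the bound at $y_0$, I would choose any radius $r>0$ and apply Lemma~\ref{lem:convexbnd} to obtain a constant $M\in\rls$ with $f\geq -M$ on the closed ball of radius $r$ around $y_0$; necessarily $M\geq -f(y_0)$. Given $x$ with $d(y_0,x)>r$, the geodesic $[y_0,x]$ meets the sphere of radius $r$ at the point $y=(1-t)y_0+tx$, where $t\as r/d(y_0,x)\in(0,1)$. Convexity of $f$ along this geodesic gives $f(y)\leq(1-t)f(y_0)+tf(x)$, and solving for $f(x)$ together with $f(y)\geq -M$ produces an affine bound in $d(y_0,x)$ with slope $\beta_0\as(-M-f(y_0))/r$. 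Crucially $\beta_0\leq 0$ because $M\geq -f(y_0)$, and so this same affine expression (with intercept $\alpha_0\as -M$) is weaker than the constant bound $f\geq -M$ on the ball itself; hence $f(x)\geq\alpha_0+\beta_0 d(y_0,x)$ holds on all of $\hs$.

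The shift to $x_0$ is then immediate: using $d(y_0,x)\leq d(y_0,x_0)+d(x_0,x)$ together with $\beta_0\leq 0$ gives
\begin{equation*}
f(x)\geq\alpha_0+\beta_0 d(y_0,x)\geq\l(\alpha_0+\beta_0 d(y_0,x_0)\r)+\beta_0 d(x_0,x),
\end{equation*}
so that $\alpha\as\alpha_0+\beta_0 d(y_0,x_0)$ and $\beta\as\beta_0$ do the job.

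I do not anticipate a serious obstacle: the whole argument rests only on convexity along a single geodesic emanating from $y_0$ together with the local lower-boundedness provided by Lemma~\ref{lem:convexbnd}. The one point requiring vigilance is the sign of the slope $\beta_0$, which must be nonpositive both in order to extend the exterior bound to the interior of the ball and in order to make the triangle-inequality step reverse correctly; both are handled automatically by the formula $\beta_0=(-M-f(y_0))/r$ and the inequality $M\geq -f(y_0)$.
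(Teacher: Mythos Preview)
Your argument is correct. The computation behind the sentence ``solving for $f(x)$ together with $f(y)\geq -M$ produces an affine bound'' is
\[
f(x)\;\geq\;\frac{f(y)-(1-t)f(y_0)}{t}\;\geq\;\frac{-M-f(y_0)}{r}\,d(y_0,x)+f(y_0)\;\geq\;-M+\beta_0\,d(y_0,x),
\]
and everything you claim about signs and about the triangle-inequality shift checks out.

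Your route, however, is quite different from the paper's. The paper argues by contradiction: assuming no affine lower bound at $x_0$ exists, it produces points $x_n$ with $f(x_n)<-n[d(x_n,x_0)+1]$, observes via Lemma~\ref{lem:convexbnd} that $(x_n)$ must be unbounded, and then pulls each $x_n$ back toward a fixed $y\in\dom f$ along the geodesic $[y,x_n]$ to manufacture a sequence $z_n\to y$ with $f(z_n)\to-\infty$, violating lower semicontinuity. Your proof is direct and constructive: you read off explicit constants $\alpha,\beta$ from the lower bound $-M$ on a single ball and the value $f(y_0)$, and you never invoke lower semicontinuity beyond what is already packaged in Lemma~\ref{lem:convexbnd}. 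The trade-off is that the paper's contradiction argument is the classical one (it cites Attouch) and generalizes verbatim to settings where one has the boundedness lemma but perhaps not an obvious explicit bound, whereas your version is shorter, yields concrete constants, and makes the role of the sign of $\beta$ transparent.
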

\begin{proof}
Assume that this is not the case, that is, for each $n\in\nat,$ there exist $x_n\in\hs$ such that
\begin{equation*}f(x_n)<-n\l[d\l(x_n,x_0\r)+1\r].\end{equation*}
Then we have
\begin{equation*}\liminf_{n\to\infty} f(x_n)\leq-\limsup_{n\to\infty} n\l[d\l(x_n,x_0\r)+1\r]\leq-\infty,\end{equation*}
which via Lemma~\ref{lem:convexbnd} implies that $(x_n)$ is unbounded. Choose $y\in\dom f$ and put
\begin{equation*}z_n\as(1-t_n)y+t_n x_n,\quad\text{with } t_n\as\frac1{\sqrt{n}d(y,x_n)}.\end{equation*}
Then $z_n\to y.$ By convexity,
\begin{align*}
 f(z_n) & \leq (1-t_n)f(y)+t_n f(x_n) \\
        & \leq (1-t_n)f(y)-t_n n\l[d(x_n,x_0)+1\r] \\
        & \leq (1-t_n)f(y)-\sqrt{n}\frac{d(x_n,x_0)+1}{d(x_n,y)}.
\end{align*}
Thus, by lower semicontinuity, we get
\begin{equation*}f(y)\leq\liminf_{n\to\infty} f(z_n)\leq -\infty,\end{equation*}
which is not possible.
\end{proof}

Let $(\hs,d)$ be an Hadamard space, $f\fun$ be convex lsc and $x\in\hs.$ Then
\begin{equation} \label{eq:resestim}
\frac1{2\lambda}d\l(J_\lambda x,v\r)^2-\frac1{2\lambda}d\l(x,v\r)^2+\frac1{2\lambda}d\l(x,J_\lambda x\r)^2+f\l(J_\lambda x\r)\leq f(v),
\end{equation}
for each $v\in\dom f.$ See~\cite[Theorem 4.1.2]{ambrosio}. Furthermore, if $x\in\cldom f$ and we set $x(t)\as S_t x,$ for $t\in[0,\infty),$ then
\begin{equation} \label{eq:evi}
\frac1{2t}d\l(x(t),v\r)^2-\frac1{2t}d\l(x(0),v\r)^2+f\l(x(t)\r)\leq f(v),
\end{equation}
for every $t\in(0,\infty)$ and $v\in\dom f.$ See~\cite[(4.0.13)]{ambrosio}.

\subsection{Weak convergence in Hadamard spaces} 
Here we recall the definition and basic properties of the weak convergence in Hadamard spaces. For a systematic account, the reader is referred to~\cite[Section 3]{semigroups}.

We shall say that a bounded sequence $(x_n)\subset\hs$ converges \emph{weakly} to a point $x\in\hs$ if $P_\gamma x_n\to x$ as $n\to\infty$ for every geodesic $\gamma\geo$ with $\gamma(0)=x.$ We use the notation $x_n\wto x.$

If there is a subsequence $(x_{n_p})$ of $(x_n)$ such that $x_{n_p}\wto z$ for some $z\in \hs,$ we say that $z$ is a \emph{weak cluster point} of the sequence $(x_n).$ The following important result first appeared in~\cite[Theorem 2.1]{jost94}.
\begin{prop} \label{prop:weakcluster}
Each bounded sequence has a weakly convergent subsequence, or in other words, each bounded sequence has a weak cluster point.
\end{prop}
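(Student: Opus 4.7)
The strategy uses the asymptotic center. For a bounded sequence $(z_n)$ in $\hs$, set $\phi_{(z_n)}(y) \as \limsup_{n\to\infty} d(y,z_n)^2$ for $y\in\hs$. The functional $\phi_{(z_n)}$ is convex (each $d(\cdot,z_n)^2$ is convex by~\eqref{eq:cat}), continuous, and has bounded sublevel sets, so Proposition~\ref{prop:intersectprop} furnishes a minimizer; taking the $\limsup$ of~\eqref{eq:cat} at midpoints gives strict convexity, hence uniqueness of the minimizer, which we call $A((z_n))$, and we write $r((z_n))^2 \as \min\phi_{(z_n)}$ for the asymptotic radius squared. Applying~\eqref{eq:cat} along the geodesic from $A((z_n))$ to an arbitrary $y$ and sending the parameter to $0$ yields the growth estimate
\[
\phi_{(z_n)}(y) \geq r((z_n))^2 + d\l(A((z_n)),y\r)^2, \qquad y\in\hs.
\]

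Applied to the given $(x_n)$, let $\rho \as \inf r((x_{n_k}))$, where the infimum runs over all subsequences. The key step is to extract a subsequence $(y_n)$ attaining $\rho$. Pick subsequences $(y^m_n)_n$ of $(x_n)$ with $r((y^m_n)) < \rho + 1/m$ and centers $c_m \as A((y^m_n))$; the sequence $(c_m)$ is bounded (by the growth estimate applied to any fixed reference point), so after passing to a subsequence $c_m \to x\in\hs$. A diagonal extraction produces indices $b_1 < b_2 < \dots$ with $x_{b_m}$ belonging to $(y^m_n)_n$ and $d(x, x_{b_m})^2 < \rho^2 + \eps_m$ for some $\eps_m \to 0$, giving a subsequence $(y_n) \as (x_{b_m})$ with $\phi_{(y_n)}(x) \leq \rho^2$; hence $r((y_n)) = \rho$ and $A((y_n)) = x$. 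For any further subsequence $(y_{n_k})$, monotonicity of $\limsup$ under sub-selection combined with the definition of $\rho$ forces $r((y_{n_k})) = \rho$; applying the growth estimate to $(y_{n_k})$ at the point $x$ gives $\phi_{(y_{n_k})}(x) \geq \rho^2 + d(A((y_{n_k})),x)^2$, while $\phi_{(y_{n_k})}(x) \leq \phi_{(y_n)}(x) = \rho^2$, forcing $A((y_{n_k})) = x$.

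To conclude $y_n \wto x$, fix a geodesic $\gamma\geo$ with $\gamma(0)=x$ and set $w_n \as P_\gamma y_n$. Supposing $w_n \not\to x$, compactness of $\gamma\l([0,1]\r)$ yields a sub-subsequence (relabelled) with $w_n \to w\in\gamma\l([0,1]\r)$, $w\neq x$. The Pythagorean inequality~\eqref{eq:pyth} gives $d(y_n,x)^2 \geq d(y_n,w_n)^2 + d(x,w_n)^2$; taking $\limsup$, and using $d(x,w_n)^2\to d(x,w)^2$ together with $|d(y_n,w)^2 - d(y_n,w_n)^2|\to 0$, we obtain $\limsup_n d(y_n,w)^2 \leq \rho^2 - d(x,w)^2 < \rho^2$, contradicting the fact that the asymptotic center of the sub-subsequence (which has radius $\rho$) is $x$. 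The main obstacle lies in attaining the infimum $\rho$ in the middle paragraph: the simpler attempt of extracting a subsequence with $d(A((x_n)),y_n)$ converging to $r((x_n))$ does \emph{not} suffice — examples in $\rls$ show the asymptotic center may shift under further sub-selection — so we must pass to a subsequence of globally minimal asymptotic radius via the diagonal/limit construction above.
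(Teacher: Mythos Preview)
The paper itself does not prove this proposition but defers to \cite{jost94} and \cite{semigroups}, so I assess your argument on its own merits.  Your overall strategy via asymptotic centers is the standard one, and the first and last paragraphs are correct.  The genuine gap is in the middle paragraph: from boundedness of $(c_m)$ you conclude that a subsequence of $(c_m)$ converges \emph{strongly} to some $x\in\hs$, but this is precisely what fails in a general Hadamard space (already in an infinite-dimensional Hilbert space bounded sequences need not have norm-convergent subsequences).  Without $c_m\to x$ you cannot pass from ``$d(c_m,y^m_n)^2$ close to $\rho^2$'' to ``$d(x,x_{b_m})^2<\rho^2+\eps_m$'', so your construction of a subsequence $(y_n)$ with $r((y_n))=\rho$ and $A((y_n))=x$ breaks down.

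The classical repair (going back to Lim) avoids any compactness of the centers by making the approximating subsequences \emph{nested}.  One cannot simply take $N_1\supset N_2\supset\dots$ with $r\l((x_n)_{n\in N_m}\r)<\rho+1/m$, because the infimum of $r$ over subsequences of $(x_n)_{n\in N_1}$ may exceed $\rho$ (interlace the unit vectors $(e_n)\subset\ell^2$ with zeros: the global infimum is $0$, but over subsequences of the odd-indexed terms it is $1$).  Instead, set $\rho_0=\rho$, choose $N_1$ with $r_1<\rho_0+1$, let $\rho_1$ be the infimum of $r$ over subsequences of $N_1$, choose $N_2\subset N_1$ with $r_2<\rho_1+\tfrac12$, and iterate.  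Then $\rho_m\le\rho_{m+1}\le r_{m+1}<\rho_m+\tfrac1{m+1}$, so $\rho_m\uparrow\rho_\infty$ and $r_m\to\rho_\infty$.  For the diagonal sequence $(u_k)$, every subsequence $(v_j)$ is (up to finitely many terms) a subsequence of each $(x_n)_{n\in N_m}$, whence $r((v_j))\ge\rho_m$ for all $m$; and $r((u_k))\le r_m$ for all $m$.  Thus every subsequence of $(u_k)$ has asymptotic radius exactly $\rho_\infty$, and your growth estimate then forces all their asymptotic centers to equal $x\as A((u_k))$.  From that point your final paragraph applies verbatim.
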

\begin{proof}
See~\cite[Theorem 2.1]{jost94} or~\cite[Proposition 3.2]{semigroups}.
\end{proof}
Like in Hilbert spaces, convex closed sets are (sequentially) weakly closed.
\begin{lem}
Let $C\subset \hs$ be a closed convex set. If $(x_n)\subset C$ and $x_n\wto x\in \hs,$ then $x\in C.$ \label{lem:wclosure}
\end{lem}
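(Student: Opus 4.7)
My plan is to argue by contradiction. Suppose $x\notin C$, set $p\as P_C(x)$ and $L\as d(x,p)>0$, and let $\gamma\geo$ be the geodesic from $x$ to $p$, so that $\gamma(0)=x$, $\gamma(1)=p$ and $d(\gamma(s),\gamma(t))=|s-t|L$. By the definition of weak convergence, $P_\gamma x_n\to x=\gamma(0)$; writing $P_\gamma x_n=\gamma(s_n)$ with $s_n\in[0,1]$, this forces $s_n\to 0$.

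The heart of the argument is to combine two instances of the Pythagorean-type inequality~\eqref{eq:pyth}. Applied to the convex set $C$ with $x_n\in C$, it yields
\[ d(x,x_n)^2\geq L^2+d(x_n,p)^2. \]
Applied to the closed convex geodesic segment $\gamma([0,1])$, with the point $p=\gamma(1)$ on it, it yields
\[ d(x_n,p)^2\geq d(x_n,\gamma(s_n))^2+(1-s_n)^2L^2. \]
Chaining the two gives
\[ d(x,x_n)^2-d(x_n,\gamma(s_n))^2\geq L^2\bigl(1+(1-s_n)^2\bigr), \]
whose right-hand side converges to $2L^2>0$. On the other hand, factoring the left-hand side as $\bigl(d(x,x_n)-d(x_n,\gamma(s_n))\bigr)\bigl(d(x,x_n)+d(x_n,\gamma(s_n))\bigr)$, the triangle inequality bounds the first factor by $d(x,\gamma(s_n))=s_nL\to 0$, while the second factor is bounded because $(x_n)$ is bounded (this is built into the definition of weak convergence). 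Hence the left-hand side tends to $0$, which contradicts the lower bound $2L^2>0$, and we conclude $x\in C$.

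The only real difficulty is to spot the correct geodesic: once $\gamma$ is chosen to point from $x$ straight to its metric projection onto $C$, the two Pythagorean inequalities stack cleanly and the rest is routine. A more naive approach via weak lower semicontinuity of the convex continuous function $d_C$ would be circular, since that very statement is essentially equivalent to the conclusion of the present lemma.
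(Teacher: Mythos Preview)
Your argument is correct. The paper does not actually prove this lemma; it simply cites \cite[Lemma 3.7]{semigroups}, so there is no ``paper's own proof'' to compare against. Your contradiction argument via the geodesic $\gamma$ from $x$ to $p=P_C(x)$ and the Pythagorean inequality~\eqref{eq:pyth} is a clean, self-contained proof in the spirit of the standard one.

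One minor simplification: the second application of~\eqref{eq:pyth} (your step yielding $d(x_n,p)^2\geq d(x_n,\gamma(s_n))^2+(1-s_n)^2L^2$) is not needed. Since $\gamma(s_n)=P_\gamma x_n$ is the nearest point of $\gamma([0,1])$ to $x_n$ and $p=\gamma(1)$ lies on this segment, you already have $d(x_n,\gamma(s_n))\leq d(x_n,p)$, and together with the first inequality $d(x,x_n)^2\geq L^2+d(x_n,p)^2$ this gives
\[
d(x,x_n)^2-d(x_n,\gamma(s_n))^2\geq L^2>0,
\]
which is enough for the contradiction. Your closing remark that appealing to weak lower semicontinuity of $d_C$ would be circular is also well taken: in the paper, Lemma~\ref{lem:convexlsc} (weak lsc of convex lsc functions) is itself proved in the cited reference via the present lemma.
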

\begin{proof}
See~\cite[Lemma 3.7]{semigroups}.
\end{proof}

\begin{defi}
We shall say that a function $f\fun$ is \emph{weakly lsc} at a~given point $x\in\dom f$ if
\begin{equation*}\liminf_{n\to\infty} f(x_n)\geq f(x),\end{equation*}
for each sequence $x_n \wto x.$ We say that $f$ is weakly lsc if it is lsc at each $x\in\dom f.$
\end{defi}

\begin{lem} \label{lem:convexlsc}
If $f\fun$ is a convex lsc function, then it is weakly lsc.
\end{lem}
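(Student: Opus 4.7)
The strategy mirrors the classical Hilbert-space argument: a convex lsc function has closed convex sublevel sets, and closed convex sets in an Hadamard space are sequentially weakly closed by Lemma~\ref{lem:wclosure}.

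Fix $x\in\dom f$ and a sequence $(x_n)\subset\hs$ with $x_n\wto x$. Put $r\as\liminf_{n\to\infty} f(x_n)$; the goal is $r\geq f(x)$. By the very definition of weak convergence recalled above, $(x_n)$ is bounded, so by Lemma~\ref{lem:convexbnd} we have $r>-\infty$. If $r=+\infty$ there is nothing to prove, so assume $r\in\rls$.

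Pass to a subsequence $(x_{n_k})$ along which $f(x_{n_k})\to r$; this subsequence is still bounded and still converges weakly to $x$, since the condition $P_\gamma x_n\to x$ for every geodesic $\gamma$ with $\gamma(0)=x$ is inherited by any subsequence. For every $\eps>0$ consider the sublevel set $S_{r+\eps}\as\l\{y\in\hs\col f(y)\leq r+\eps\r\}$, which is convex by convexity of $f$ and closed by lower semicontinuity of $f$. Eventually $x_{n_k}\in S_{r+\eps}$, and by Lemma~\ref{lem:wclosure} the set $S_{r+\eps}$ is sequentially weakly closed. Hence $x\in S_{r+\eps}$, i.e. $f(x)\leq r+\eps$. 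Letting $\eps\to 0^+$ yields $f(x)\leq r$, as desired.

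Since the argument invokes only the already established Lemmas~\ref{lem:wclosure} and~\ref{lem:convexbnd} together with the elementary fact that closed convex sublevel sets characterize lsc convex functions, no substantial obstacle is expected. The single point that requires the Hadamard setting (rather than being automatic as in Hilbert spaces) is the sequential weak closedness of closed convex sets, which is exactly the content of Lemma~\ref{lem:wclosure}; and the only point where one must avoid a degenerate case is ruling out $r=-\infty$, handled by the boundedness of weakly convergent sequences together with Lemma~\ref{lem:convexbnd}.
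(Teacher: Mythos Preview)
Your argument is correct: the sublevel-set route via Lemma~\ref{lem:wclosure}, with Lemma~\ref{lem:convexbnd} to exclude $r=-\infty$, is the standard proof and all steps go through as written. The paper itself provides no in-text proof but merely cites \cite[Lemma~3.9]{semigroups}, so there is nothing to compare; your argument is essentially the one that citation points to.
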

\begin{proof}
See~\cite[Lemma 3.9]{semigroups}.
\end{proof}

\subsection{Nonexpansive mappings in Hadamard spaces} 
The proof of the Lie-Trotter-Kato formula relies upon the notion of a resolvent associated to a family of nonexpansive maps; see \cite[Lemma~3.1]{stoj}.
\begin{defi}\label{def:resmap}
Let $(\hs,d)$ be an Hadamard space and $\l(F_\rho\r)_{\rho>0}$ be a family of nonexpansive maps $F_\rho\map.$ Given $\lambda,\rho\in(0,\infty)$ and $x\in\hs,$ the map
\begin{equation}
y\mapsto \frac1{1+\frac\lambda\rho}x+\frac{\frac\lambda\rho}{1+\frac\lambda\rho}F_\rho y,\qquad y\in\hs,
\end{equation}
is a contraction with Lipschitz constant $\frac{\frac\lambda\rho}{1+\frac\lambda\rho}$ and hence has a unique fixed point, which is denoted $R_{\lambda,\rho}x.$ The mapping $x\mapsto R_{\lambda,\rho}x$ is called the \emph{resolvent} of the family $\l(F_\rho\r)_{\rho}.$
\end{defi}
The resolvent $R_{\lambda,\rho}$ is obviously a nonexpansive mapping. The following important approximation theorem will be invoked in the proof of Theorem~\ref{thm:stoj}.
\begin{thm} \label{thm:approx}
Let $(\hs,d)$ be an Hadamard space and $f\fun$ be of the form~\eqref{eq:f}. We use Notation~\ref{not:s}.
\begin{enumerate}
 \item Let $R_{\lambda,\rho}$ be the resolvent of $F_\rho\as\l(J_\rho^k\circ\dots\circ J_\rho^1 \r).$ If we have
\begin{subequations}
\begin{align*}
J_\lambda (x) & =  \lim_{\rho\to0+} R_{\lambda,\rho}(x)  ,\\
\intertext{for every $x\in\cldom f$ and $\lambda\in(0,\infty),$ then,}
S_t(x) & =  \lim_{n\to\infty} \l(J_{\frac{t}{n}}^k\circ\dots\circ J_{\frac{t}{n}}^1\r)^{(n)}(x), 
\end{align*}
\end{subequations}
for every $x\in\cldom f$ and $t\in[0,\infty),$ and the convergence is uniform with respect to $t$ on each compact subinterval of $[0,\infty).$
\item Let now $R_{\lambda,\rho}$ be the resolvent of $F_\rho\as\l(S_\rho^k\circ P_k \circ\dots\circ S_\rho^1\circ P_1\r).$ If we have
\begin{subequations}
\begin{align*}
J_\lambda (x) & =  \lim_{\rho\to0+} R_{\lambda,\rho}(x)  ,\\
\intertext{for every $x\in\cldom f$ and $\lambda\in(0,\infty),$ then,}
S_t(x) & =  \lim_{n\to\infty} \l(S_{\frac{t}{n}}^k\circ P_k \circ\dots\circ S_{\frac{t}{n}}^1\circ P_1\r)^{(n)}(x), 
\end{align*}
\end{subequations}
for every $x\in\cldom f$ and $t\in[0,\infty).$
\end{enumerate}
\end{thm}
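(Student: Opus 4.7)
The plan is to establish both parts simultaneously via a nonlinear Chernoff--Kato argument, since the structure is identical and only the family $(F_\rho)_{\rho>0}$ changes. The hypothesis $R_{\lambda,\rho}(x)\to J_\lambda(x)$ as $\rho\to 0+$ should be read as convergence of the ``resolvent'' of the discrete semigroup generated by $F_\rho$ to the resolvent of the continuous semigroup $S_t$; the task is then to exponentiate this infinitesimal information.

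First, because every $J_\lambda$ and every $R_{\lambda,\rho}$ is nonexpansive, the pointwise hypothesis upgrades by a triangle-inequality induction on $m$ to
\[
R_{\lambda,\rho}^{(m)}(x)\longrightarrow J_\lambda^{(m)}(x)\qquad(\rho\to 0+),
\]
for every $m\in\nat$, $\lambda\in(0,\infty)$, and $x\in\cldom f$. Next, the exponential formula~\eqref{eq:defsem} gives $J_{t/m}^{(m)}(x)\to S_t(x)$ as $m\to\infty$. So, given $\eps>0$ and $t>0$, I would first fix $m$ large enough that $d(J_{t/m}^{(m)}(x),S_t(x))<\eps$.

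The heart of the argument is then a Chernoff-type comparison between $F_\rho^{(n)}$ and $R_{\lambda,\rho}^{(m)}$ for the resolvent at level $\lambda$ associated to the family $(F_\rho)$. The fixed-point identity
\[
R_{\lambda,\rho}y=\frac{\rho}{\rho+\lambda}x+\frac{\lambda}{\rho+\lambda}F_\rho(R_{\lambda,\rho}y),
\]
exhibits $R_{\lambda,\rho}x$ as a convex combination of $x$ and $F_\rho R_{\lambda,\rho}x$, so iterating and using the CAT(0) inequality~\eqref{eq:cat} I expect to produce an estimate of the form
\[
d\l(F_\rho^{(n)}(x),\,R_{\lambda,\rho}^{(m)}(x)\r)^2\leq C(x,t)\,\phi(n,m,\rho,\lambda),
\]
where $\phi\to 0$ when $n\rho=m\lambda=t$ is kept fixed and $n\to\infty$ with $m$ held fixed. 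Choosing $\lambda=t/m$ and $\rho=t/n$, combining this comparison with the previous two bullets then gives $d(F_{t/n}^{(n)}(x),S_t(x))<3\eps$ for all sufficiently large~$n$.

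The main obstacle is producing this Chernoff-type bound using only nonexpansiveness of $F_\rho$ and the CAT(0) inequality, without recourse to any linear structure. In a Hilbert space the comparison follows from a binomial identity and reduces to a straightforward variance-type calculation; in an Hadamard space one must instead run a two-parameter induction that controls the CAT(0) ``defects'' arising whenever two iterates are compared via the midpoint estimate in Proposition~\ref{prop:intersectprop}. Finally, uniformity in $t$ on compact subintervals of $[0,\infty)$ should follow by equicontinuity, using nonexpansiveness of $F_\rho^{(n)}$ in $x$, the growth estimate of Lemma~\ref{lem:growth}, and strong continuity of $t\mapsto S_t(x)$; however, as the author notes about \cite[Theorem~3.12]{stoj}, this last step is delicate and would require a careful reexamination.
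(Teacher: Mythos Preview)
The paper does not prove Theorem~\ref{thm:approx}: its entire proof is the single line ``See~\cite[Theorem 3.13]{stoj}.'' In other words, this approximation theorem is imported as a black box from Stojkovic's paper and then \emph{applied} in the proof of Theorem~\ref{thm:stoj}. So your attempt to sketch an independent proof goes beyond what the paper itself does, and there is no ``paper's own proof'' to compare against in any substantive sense.

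As for the sketch itself: the overall architecture you describe---upgrade pointwise resolvent convergence to convergence of iterates, invoke the exponential formula~\eqref{eq:defsem}, and bridge $F_\rho^{(n)}$ to $R_{\lambda,\rho}^{(m)}$ by a Chernoff-type estimate---is indeed the strategy behind results of this kind (and is essentially what \cite{stoj} carries out). However, what you have written is a plan, not a proof. The crucial step, the CAT(0) Chernoff comparison
\[
d\l(F_\rho^{(n)}(x),\,R_{\lambda,\rho}^{(m)}(x)\r)^2\leq C(x,t)\,\phi(n,m,\rho,\lambda),
\]
is stated as something you ``expect to produce,'' and you yourself flag it as ``the main obstacle.'' Likewise, the uniformity in $t$ is explicitly left open (and, as the paper notes just after the statement of Theorem~\ref{thm:stoj}, the author was himself unable to verify the uniform-convergence argument in \cite[Theorem~3.12]{stoj}). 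So if the intent is to replace the citation by a self-contained proof, the core estimate still needs to be written out; if the intent is merely to match the paper, a one-line citation to \cite[Theorem~3.13]{stoj} is all that is required.
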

\begin{proof}
See~\cite[Theorem 3.13]{stoj}.
\end{proof}



\section{Proof of the main result} \label{sec:proof}

In this section, we give the promised alternative proof of Theorem~\ref{thm:stoj}. For the readers' convenience, we recall the statement here.
\begin{thm}[Theorem~\ref{thm:stoj} above]
Let $(\hs,d)$ be an Hadamard space and $f\fun$ be of the form~\eqref{eq:f}. We use Notation~\ref{not:s}. Then, for every $t\in[0,\infty)$ and $x\in\cldom f,$ we have
\begin{subequations}
\begin{align}
S_t(x) & =  \lim_{n\to\infty} \l(J_{\frac{t}{n}}^k\circ\dots\circ J_{\frac{t}{n}}^1\r)^{(n)}(x) , \label{i:stoj:i} \\
\intertext{and, if moreover there exists $1\leq l\leq k$ such that $\dom f_l$ is locally compact, then}
S_t(x) & =  \lim_{n\to\infty} \l(S_{\frac{t}{n}}^k\circ P_k \circ\dots\circ S_{\frac{t}{n}}^1\circ P_1\r)^{(n)}(x) . \label{i:stoj:ii}
\end{align}
\end{subequations}
\end{thm}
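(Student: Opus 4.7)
The plan is to apply Theorem~\ref{thm:approx} to reduce both parts to the single claim: for every $\lambda>0$ and $x\in\cldom f$,
\begin{equation*}
\lim_{\rho\to0+} R_{\lambda,\rho}(x) = J_\lambda(x),
\end{equation*}
where $R_{\lambda,\rho}$ is the resolvent of the relevant family $F_\rho$. Set $x_\rho\as R_{\lambda,\rho}(x)$, $\mu\as\lambda/\rho$, $z\as J_\lambda(x)\in\dom f$ and $\phi(v)\as f(v)+\frac1{2\lambda}d(x,v)^2$. By Definition~\ref{def:resmap}, $x_\rho$ lies on the geodesic from $x$ to $F_\rho x_\rho$ at parameter $\mu/(1+\mu)$; applying~\eqref{eq:cat} there yields
\begin{equation*}
\frac1{2\rho}\bigl[d(F_\rho x_\rho,v)^2-d(x_\rho,v)^2\bigr]\geq \frac1{2\lambda}\bigl[d(x_\rho,v)^2-d(x,v)^2\bigr]+\frac1{2(\rho+\lambda)}d(x,F_\rho x_\rho)^2.
\end{equation*}
For part~(i) I set $y_0^\rho\as x_\rho$, $y_j^\rho\as J_\rho^j y_{j-1}^\rho$, apply~\eqref{eq:resestim} at each step and telescope the resulting chain; combined with the CAT(0) inequality above, this produces the master estimate (call it $(\star)$)
\begin{equation*}
\frac1{2\lambda}d(x_\rho,v)^2+A_\rho+B_\rho+\sum_{j=1}^k f_j(y_j^\rho)\leq \phi(v),\qquad v\in\dom f,
\end{equation*}
where $A_\rho\as\frac1{2(\rho+\lambda)}d(x,F_\rho x_\rho)^2\geq0$ and $B_\rho\as\frac1{2\rho}\sum_j d(y_{j-1}^\rho,y_j^\rho)^2\geq0$.

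Fixing $w\in\dom f$ in $(\star)$, Lemma~\ref{lem:growth} supplies an affine lower bound on each $f_j$, and the Cauchy--Schwarz bound $d(y_j^\rho,x_\rho)\leq\sqrt{2k\rho B_\rho}$ lets me absorb the iterate terms via $\sqrt{\rho B_\rho}\leq\tfrac12(\rho+B_\rho)$. The result is a quadratic inequality in $d(x_\rho,w)$ with positive leading coefficient, bounding $(x_\rho)_{\rho\in(0,1]}$ and hence $B_\rho$, so that $d(y_j^\rho,x_\rho)=O(\sqrt\rho)\to0$ and $d(x_\rho,F_\rho x_\rho)=O(\rho)\to0$. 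Proposition~\ref{prop:weakcluster} then extracts a subsequence $x_{\rho_n}\wto z^*$; nonexpansivity of projections onto geodesics promotes this to $y_j^{\rho_n}\wto z^*$ and $F_{\rho_n}x_{\rho_n}\wto z^*$. Taking $v=z$ in~$(\star)$ and passing to $\liminf_n$ term by term, using Lemma~\ref{lem:convexlsc} for the $f_j$ and convex continuity of $d(p,\cdot)^2$, I obtain
\begin{equation*}
\tfrac1{2\lambda}d(x,z^*)^2+f(z^*)\leq\liminf_n\bigl(\text{LHS of }(\star)\bigr)\leq \phi(z)=\min\phi,
\end{equation*}
so $\phi(z^*)\leq\phi(z)$, and therefore $z^*=z$ by uniqueness of the minimizer. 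Consequently $x_\rho\wto z$ along the whole family.

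Strong convergence now comes out of the same estimate by a trapping argument: with $v=z$, the four nonnegative terms on the left of~$(\star)$ have liminfs bounded below by $0$, $\tfrac1{2\lambda}d(x,z)^2$, $0$ and $f(z)$, which sum to $\phi(z)$, while each LHS value is at most $\phi(z)$; hence each liminf must equal its lower bound and in particular $d(x_\rho,z)\to0$. For part~(ii) the same scheme runs with $w_j^\rho\as P_j y_{j-1}^\rho$ and $y_j^\rho\as S_\rho^j w_j^\rho$: the EVI~\eqref{eq:evi} replaces~\eqref{eq:resestim}, the pythagorean inequality~\eqref{eq:pyth} absorbs each projection step, and one obtains an analogue of~$(\star)$ with $B_\rho=\frac1{2\rho}\sum_j d(y_{j-1}^\rho,w_j^\rho)^2$. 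The new obstacle---and the reason for the local compactness hypothesis on some $\dom f_l$---is that the semigroup half-step $w_j^\rho\mapsto S_\rho^j w_j^\rho$ is only controlled by pointwise strong continuity of $(S_t^j)_{t\geq0}$; local compactness lets me extract a strongly convergent subsequence of the iterates, converting pointwise continuity into the missing quantitative estimate $d(w_j^\rho,y_j^\rho)\to0$ needed to propagate weak convergence through the $k$-fold composition. I expect this last step in part~(ii) to be the main technical hurdle, since the coupling between $d(x_\rho,w)$ and $B_\rho$ in the boundedness bootstrap, and the lack of a clean quantitative semigroup estimate, both have to be disentangled by hand.
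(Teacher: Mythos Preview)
Your plan follows the same route as the paper: reduce via Theorem~\ref{thm:approx}, derive a master estimate by telescoping the per-step variational inequalities against the CAT(0) inequality~\eqref{eq:cat}, bootstrap boundedness from Lemma~\ref{lem:growth}, identify the weak cluster point with $J_\lambda x$ via weak lower semicontinuity, and upgrade to strong convergence by a trapping argument. Your packaging of~$(\star)$ with the extra nonnegative terms $A_\rho$ and $B_\rho$ retained is in fact slightly cleaner than the paper's two-inequality scheme~\eqref{eq:estim2}--\eqref{eq:estim3}, and the trapping step is correct: the liminf lower bounds sum exactly to $\phi(z)$, forcing $d(x_\rho,z)\to0$.

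There is, however, a genuine gap in your part~(ii) boundedness bootstrap, which you flag but do not resolve. Your $B_\rho$ in~(ii) is $\frac1{2\rho}\sum_j d(y_{j-1}^\rho,w_j^\rho)^2$, which controls only the \emph{projection} half-steps; the semigroup half-steps $d(w_j^\rho,y_j^\rho)=d(w_j^\rho,S_\rho^j w_j^\rho)$ are not bounded by anything in~$(\star)$, so the Cauchy--Schwarz estimate $d(y_j^\rho,x_\rho)\leq\sqrt{2k\rho B_\rho}$ from part~(i) breaks down and you cannot close the quadratic inequality in $d(x_\rho,w)$. The paper circumvents this by a different telescoping: using nonexpansivity of each $S_\rho^i\circ P_i$ and the fixed point $x\in\cldom f$, one gets
\begin{equation*}
d\l(x,y_j^\rho\r)\leq\sum_{i=1}^j d\l(x,S_\rho^i P_i x\r)+d\l(x,x_\rho\r),
\end{equation*}
and the first sum is bounded for $\rho\in(0,1]$ because $P_i x=x$ and $t\mapsto S_t^i x$ is continuous at $0$. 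This replaces your $B_\rho$-coupling entirely and makes the boundedness bootstrap in~(ii) identical to that in~(i). Once boundedness is secured, your outlined propagation via local compactness is correct and coincides with the paper's: strong convergence at index~$l$, the projection-step control $d(y_{j-1}^\rho,w_j^\rho)\to0$ from $B_\rho$, and the estimate $d(S_\rho^j w_j^\rho,w_j^\rho)\leq 2d(w_j^\rho,z)+d(z,S_\rho^j z)$ let you push strong convergence around the cycle and conclude $z\in\cldom f$.
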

\begin{proof}
We mix various facts derived in~\cite{stoj} and employ the weak convergence as appropriate.

We first show~\eqref{i:stoj:i}. By Theorem~\ref{thm:approx}, it suffices to show $R_{\lambda,\rho}x\to J_\lambda x$ as $\rho\to 0,$ where $R_{\lambda,\rho}$ now corresponds to the choice $F_\rho\as\l(J_\rho^k\circ\dots\circ J_\rho^1 \r).$ Put $x_0(\rho)\as R_{\lambda,\rho} x$ and $x_j(\rho)\as J_\rho^j x_{j-1}(\rho)$ for $j=1,\dots,k.$ By the definition of $R_{\lambda,\rho}$ we have 
\begin{equation} \label{eq:convcomb}
x_0(\rho)=\frac{1}{1+\frac{\lambda}{\rho}} x+\frac{\frac{\lambda}{\rho}}{1+\frac{\lambda}{\rho}}x_k(\rho),
\end{equation}
and consequently,
\begin{align} 
d\l(x,x_k(\rho)\r) & =\frac{\rho+\lambda}{\lambda}d\l(x,x_0(\rho)\r), \label{eq:1} \\
\intertext{together with,}
d\l(x_0(\rho),x_k(\rho)\r) & =\frac{\rho}{\lambda}d\l(x,x_0(\rho)\r). \label{eq:2} 
\end{align}

Applying~\eqref{eq:resestim} for each $f_j,$ with $j=1,\dots,k,$ and summing the resulting inequalities up, we arrive~at
\begin{equation} \label{eq:estim2}
2\rho f(v) \geq 2\rho\sum_{j=1}^k f_j\l(x_j(\rho)\r)+d\l(x_k(\rho),v\r)^2 -d\l(x_0(\rho),v\r)^2 +\sum_{j=1}^k d\l(x_{j-1}(\rho),x_j(\rho)\r)^2,
\end{equation}
for every $v\in\dom f.$ The inequality~\eqref{eq:cat} yields
\begin{equation}
\frac{\frac{\lambda}{\rho}}{1+\frac{\lambda}{\rho}}d\l(v,x_k(\rho)\r)^2  \geq d\l(v,x_0(\rho)\r)^2-\frac1{1+\frac{\lambda}{\rho}}d(v,x)^2  +\frac{\frac{\lambda}{\rho}}{\l(1+\frac{\lambda}{\rho}\r)^2}d\l(x,x_k(\rho)\r)^2. \label{eq:estim2.5}
\end{equation}
Combining this inequality with~\eqref{eq:1} and~\eqref{eq:estim2} gives after some elementary calculations that
\begin{equation} \label{eq:estim3}
2\lambda f(v) \geq 2\lambda\sum_{j=1}^k f_j\l(x_j(\rho)\r)+d\l(x_0(\rho),x\r)^2 +d\l(x_0(\rho),v\r)^2 -d(x,v)^2.
\end{equation}
for every $v\in\dom f.$ 

Fix now a sequence $\rho_n\to0.$ We will show that, for every $j=0,1,\dots,k,$ the sequence $\l(x_j\l(\rho_n\r)\r)_n$ is bounded. To this end, apply Lemma~\ref{lem:growth} to obtain $\alpha,\beta\in\rls$ such that
\begin{equation*}
 f_j\l(x_j\l(\rho_n\r)\r)\geq\alpha+\beta d\l(x,x_j\l(\rho_n\r)\r),
\end{equation*}
for every $j=1,\dots,k$ and $n\in\nat.$ Since
\begin{equation*}
 d\l(x_0\l(\rho_n\r),v\r)^2  \geq \frac12 d\l(x_0\l(\rho_n\r),x\r)^2 - d(x,v)^2,
\end{equation*}
inequality~\eqref{eq:estim3} yields
\begin{equation} \label{eq:estim3a}
\lambda f(v) \geq \lambda\biggl[k\alpha+\beta\sum_{j=1}^k d\l(x,x_j\l(\rho_n\r)\r)\biggr]+\frac34 d\l(x_0\l(\rho_n\r),x\r)^2 - d(x,v)^2
\end{equation}
for every $v\in\dom f.$ The case $\beta\geq0$ is easy, we therefore assume $\beta<0.$ Next, observe that
\begin{align}
d\l(x,x_j\l(\rho_n\r)\r) & \leq d\l(x,J_{\rho_n}^jx\r)+ d\l(J_{\rho_n}^jx,J_{\rho_n}^jJ_{\rho_n}^{j-1}x\r)+\cdots+d\l(J_{\rho_n}^j\cdots J_{\rho_n}^1 x,x_j\l(\rho_n\r) \r) \nonumber\\
& \leq \sum_{i=1}^j d\l(x,J_{\rho_n}^i x\r) +d\l(x,x_0\l(\rho_n\r)\r), \nonumber \\
& \leq L+d\l(x,x_0\l(\rho_n\r)\r) \label{eq:estim5}.
\end{align}
for some $L>0$ and every $n\in\nat$ and $j=1,\dots,k.$ Plugging this inequality into~\eqref{eq:estim3a} gives that $\l(x_0\l(\rho_n\r)\r)_n$ is bounded and by~\eqref{eq:estim5} we get that the sequence $\l(x_j\l(\rho_n\r)\r)_n$ is bounded also for $j=1,\dots,k.$

Then Lemma~\ref{lem:convexbnd} yields that the sequence $\l(f_j\l(x_j\l(\rho_n\r)\r)\r)_n$ is bounded from below and inequality~\eqref{eq:estim3} implies that it is also bounded from above, for every $j=1,\dots,k.$

Consider~\eqref{eq:estim2} with $\rho_n$ and take the limit $n\to\infty$ to obtain via~\eqref{eq:2} that
\begin{equation} \label{eq:limit}
\lim_{n\to\infty} \sum_{j=1}^k d\l(x_{j-1}\l(\rho_{n}\r),x_j\l(\rho_{n}\r)\r)^2 =0. 
\end{equation}

Let $z\in\hs$ be a weak cluster point of~$x_0\l(\rho_n\r)$ and $x_0\l(\rho_{n_p}\r)$ be a sequence weakly converging to~$z.$ Recall that the existence of a weak cluster point was guaranteed by Proposition~\ref{prop:weakcluster}. By~\eqref{eq:limit}, also $x_j\l(\rho_{n_p}\r)$ weakly converges to~$z,$ for each $j=1,\dots,k.$ Consequently, $z\in\cldom f$ due to Lemma~\ref{lem:wclosure}.

Consider next inequality~\eqref{eq:estim3} with $\rho_{n_p}$ and take the limit $p\to\infty.$ Since the functions $f_j,$ with $j=1,\dots,k,$ are weakly lsc by Lemma~\ref{lem:convexlsc}, we obtain $z=J_\lambda x.$ In particular, $z\in\dom f.$ Since $z$ was an arbitrary weak cluster point of~$x_0\l(\rho_n\r),$ we get $x_0\l(\rho_n\r)\wto J_\lambda x.$ Applying once again~\eqref{eq:estim3} with $v\as J_\lambda x,$ gives $x_0\l(\rho_n\r)\to J_\lambda x$ as $n\to\infty.$ This finishes the proof of~\eqref{i:stoj:i}.

Next we show~\eqref{i:stoj:ii}. By Theorem~\ref{thm:approx}, it again suffices to show $R_{\lambda,\rho}x\to J_\lambda x$ as $\rho\to 0,$ where $R_{\lambda,\rho}$ now corresponds to the choice $F_\rho\as\l(S_\rho^k\circ P_k \circ\dots\circ S_\rho^1\circ P_1\r).$

Put $x_0(\rho)\as R_{\lambda,\rho} x$ and $x_j(\rho)\as\l(S_\rho^j\circ P_j\r) x_{j-1}(\rho)$ for $j=1,\dots,k.$ Since we again have~\eqref{eq:convcomb}, the same arguments as above yield that~\eqref{eq:1}, \eqref{eq:2} and also~\eqref{eq:estim2.5} hold true.

Applying~\eqref{eq:evi} with $f_j,$ for $j=1,\dots,k$ and summing the resulting inequalities up gives
\begin{equation} \label{eq:estim4}
f(v) \geq \frac1{2\rho}d\l(x_k(\rho),v\r)^2-\frac1{2\rho}d\l(x_0(\rho),v\r)^2+\sum_{j=1}^k f_j\l(x_j(\rho)\r),
\end{equation}
for every $v\in\dom f.$ Combining this inequality with~\eqref{eq:1} and~\eqref{eq:estim2.5} gives after some elementary calculations that~\eqref{eq:estim3} holds as well.

Fix now a sequence $\rho_n\to0.$ For every $j=0,1,\dots,k,$ the sequence $\l(x_j\l(\rho_n\r)\r)_n$ is bounded by the same argument as above. Then Lemma~\ref{lem:convexbnd} yields that the sequence $\l(f_j\l(x_j\l(\rho_n\r)\r)\r)_n$ is bounded from below and inequality~\eqref{eq:estim3} implies that it is also bounded from above for every $j=1,\dots,k.$

The inequality~\eqref{eq:evi} implies that
\begin{equation*}d\l(x_j\l(\rho_n\r),v\r)^2-d\l(P_jx_{j-1}\l(\rho_n\r),v\r)^2 +2\rho_n f_j\l(x_j\l(\rho_n\r)\r)\leq 2\rho_n f_j(v),\end{equation*}
for each $j=1,\dots,k$ and $v\in\dom f_j.$ Take the limit $n\to\infty$ to obtain
\begin{equation} \label{eq:limsup1}
\limsup_{n\to\infty} \l[ d\l(x_j\l(\rho_n\r),v\r)^2-d\l(P_jx_{j-1}\l(\rho_n\r),v\r)^2 \r] \leq 0,
\end{equation}
for each $j=1,\dots,k$ and $v\in\dom f_j.$ 

Let us show that
\begin{equation} \label{eq:limitadhoc4}
 d\l(x_j\l(\rho_n\r),v\r)^2-d\l(x_0\l(\rho_n\r),v\r)^2\to0,\quad\text{as } n\to\infty,
\end{equation}
for each $j=1,\dots,k$ and $v\in\cldom f.$ On one hand, we estimate
\begin{align}
 d\l(x_j\l(\rho_n\r),v\r)-d\l(x_0\l(\rho_n\r),v\r) & \leq d\l(S_{\rho_n}^jP_j v,v\r)+d\l(S_{\rho_n}^jP_j S_{\rho_n}^{j-1}P_{j-1} v,S_{\rho_n}^jP_j v\r) \nonumber \\
& \quad +\cdots+ d\l(S_{\rho_n}^jP_j\cdots S_{\rho_n}^1P_1 v,x_j\l(\rho_n\r)\r) \nonumber \\ & \quad  - d\l(x_0\l(\rho_n\r),v\r) \nonumber \\ & \leq \sum_{i=1}^j d\l(S_{\rho_n}^iP_i v,v\r) +d\l(x_0\l(\rho_n\r),v\r)-d\l(x_0\l(\rho_n\r),v\r) \nonumber \\ & = \sum_{i=1}^j d\l(S_{\rho_n}^iP_i v,v\r), \label{eq:adhocalign1}
\end{align}
and on the other hand,
\begin{align}
 d\l(x_j\l(\rho_n\r),v\r)-d\l(x_0\l(\rho_n\r),v\r) & \geq d\l(x_k\l(\rho_n\r),S_{\rho_n}^kP_k\cdots S_{\rho_n}^{j+1}P_{j+1}v\r)-d\l(x_0\l(\rho_n\r),v\r) \nonumber \\ & \geq d\l(x_k\l(\rho_n\r),v\r)- d\l(v,S_{\rho_n}^kP_k\cdots S_{\rho_n}^{j+1}P_{j+1}v\r)\nonumber \\ &\quad -d\l(x_0\l(\rho_n\r),v\r) \nonumber \\ &\geq - d\l(x_k\l(\rho_n\r),x_0\l(\rho_n\r)\r)- d\l(v,S_{\rho_n}^kP_k v\r)-\cdots \nonumber \\ &\quad -d\l(S_{\rho_n}^kP_k\cdots S_{\rho_n}^{j+2}P_{j+2}v,S_{\rho_n}^kP_k\cdots S_{\rho_n}^{j+1}P_{j+1} v\r) \nonumber \\ & \geq -d\l(x_k\l(\rho_n\r),x_0\l(\rho_n\r)\r) - \sum_{i=j+1}^k d\l(v,S_{\rho_n}^iP_i v\r). \label{eq:adhocalign2}
\end{align}
Observe that $d\l(x_k\l(\rho_n\r),x_0\l(\rho_n\r)\r)=\frac{\rho_n}{\lambda}d\l(x,x_0\l(\rho_n\r)\r)\to0$ as $n\to\infty$ from~\eqref{eq:2} and furthermore
\begin{align*}
 d\l(x_j\l(\rho_n\r),v\r)^2-d\l(x_0\l(\rho_n\r),v\r)^2 & = \l[d\l(x_j\l(\rho_n\r),v\r)-d\l(x_0\l(\rho_n\r),v\r)\r] \\ &\quad \cdot\l[d\l(x_j\l(\rho_n\r),v\r)+d\l(x_0\l(\rho_n\r),v\r)\r],
\end{align*}
which along with~\eqref{eq:adhocalign1} and~\eqref{eq:adhocalign2} gives~\eqref{eq:limitadhoc4}.

Combining~\eqref{eq:limsup1} and~\eqref{eq:limitadhoc4} gives, we obtain
\begin{equation*}\limsup_{n\to\infty} \l[ d\l(x_{j-1}\l(\rho_n\r),v\r)^2-d\l(P_jx_{j-1}\l(\rho_n\r),v\r)^2 \r] \leq 0,\end{equation*}
for each $j=1,\dots,k$ and $v\in\dom f.$ The inequality~\eqref{eq:pyth} now reads
\begin{equation*} d\l(P_j x_{j-1}\l(\rho_n\r),v\r)^2 + d\l(x_{j-1}\l(\rho_n\r),P_j x_{j-1}\l(\rho_n\r)\r)^2\leq d\l(x_{j-1}\l(\rho_n\r),v\r)^2,\end{equation*}
for every $v\in\cldom f_j.$ Therefore
\begin{equation} \label{eq:limsup2}
\limsup_{n\to\infty} d\l(x_{j-1}\l(\rho_n\r),P_j x_{j-1}\l(\rho_n\r)\r)^2 \leq 0,
\end{equation}
for every $j=1,\dots,k.$

Recall that $\l(x_l\l(\rho_n\r)\r)_n\subset\dom f_l$ is a bounded sequence and $\dom f_l$ is locally compact. Let $z\in\dom f_l$ be a cluster point~$x_l\l(\rho_n\r)$ and $x_l\l(\rho_{n_p}\r)$ be a sequence converging to~$z.$ If $l=k,$ then by~\eqref{eq:2} we get $x_0\l(\rho_{n_p}\r)\to z$ and furthermore, by~\eqref{eq:limsup2} we also know $P_1 x_0\l(\rho_{n_p}\r)\to z.$ Therefore $z\in\cldom f_1$ and inequality~\eqref{eq:limsup1} together with an easy approximation argument yield $x_1\l(\rho_{n_p}\r)\to z.$ Repeating this procedure we obtain that $z\in\cldom f$ and $x_j\l(\rho_{n_p}\r)$ converges to~$z,$ for $j=1,\dots,k.$ If $l<k,$ we use the same argument and again obtain that $z\in\cldom f$ and $x_j\l(\rho_{n_p}\r)$ converges to~$z,$ for $j=1,\dots,k.$

Consider next inequality~\eqref{eq:estim3} with $v\as J_\lambda x$ and $\rho\as \rho_{n_p}$ and take the limit $p\to\infty.$ Since the functions $f_j,$ with $j=1,\dots,k,$ are lsc, we obtain $z=J_\lambda x.$ In particular, $z\in\dom f.$ Since $z$ was an arbitrary cluster point of~$x_0\l(\rho_n\r),$ we get $x_0\l(\rho_n\r)\to J_\lambda x.$ Applying once again~\eqref{eq:estim3} with $v\as J_\lambda x$ and $\rho\as \rho_n$ gives $x_0\l(\rho_n\r)\
\to J_\lambda x$ as $n\to\infty.$ This finishes the proof of~\eqref{i:stoj:ii}. 
\end{proof}



\bibliographystyle{siam}
\bibliography{lie-trotter-kato}

\begin{thebibliography}{10}

\bibitem{ambrosio}
{\sc L.~Ambrosio, N.~Gigli, and G.~Savar{\'e}}, {\em Gradient flows in metric
  spaces and in the space of probability measures}, Lectures in Mathematics ETH
  Z\"urich, Birkh\"auser Verlag, Basel, second~ed., 2008.

\bibitem{genaro}
{\sc D.~Ariza-Ruiz, L.~Leu\c{s}tean, and G.~L\'opez-Acedo}, {\em Firmly
  nonexpansive mappings in classes of geodesic spaces}, to appear in Trans.
  Amer. Math. Soc., arXiv:1203.1432v3,  (2012).

\bibitem{attouch79}
{\sc H.~Attouch}, {\em Familles d'op\'erateurs maximaux monotones et
  mesurabilit\'e}, Ann. Mat. Pura Appl. (4), 120 (1979), pp.~35--111.

\bibitem{median-mean}
{\sc M.~Ba{\v{c}}{\'a}k}, {\em Computing medians and means in {H}adamard
  spaces}, Preprint. arXiv:1210.2145.

\bibitem{semigroups}
\leavevmode\vrule height 2pt depth -1.6pt width 23pt, {\em Convergence of
  semigroups under nonpositive curvature}, to appear in Trans. Amer. Math.
  Soc., arXiv:1211.0414.

\bibitem{ppa}
\leavevmode\vrule height 2pt depth -1.6pt width 23pt, {\em The proximal point
  algorithm in metric spaces}, Israel J. Math., 194 (2013), pp.~689--701.

\bibitem{baucom}
{\sc H.~H. Bauschke and P.~L. Combettes}, {\em Convex analysis and monotone
  operator theory in {H}ilbert spaces}, CMS Books in Mathematics/Ouvrages de
  Math\'ematiques de la SMC, Springer, New York, 2011.

\bibitem{benner}
{\sc P.~Benner and M.~Ba{\v{c}}{\'a}k}, {\em Computing the posterior
  expectation of phylogenetic trees}, Preprint. arXiv:1305.3692.

\bibitem{brezis-book}
{\sc H.~Br{\'e}zis}, {\em Op\'erateurs maximaux monotones et semi-groupes de
  contractions dans les espaces de {H}ilbert}, North-Holland Publishing Co.,
  Amsterdam, 1973.
\newblock North-Holland Mathematics Studies, No. 5. Notas de Matem{\'a}tica
  (50).

\bibitem{brezis-pazy70}
{\sc H.~Brezis and A.~Pazy}, {\em Semigroups of nonlinear contractions on
  convex sets}, J. Functional Analysis, 6 (1970), pp.~237--281.

\bibitem{brezis-pazy72}
{\sc H.~Br{\'e}zis and A.~Pazy}, {\em Convergence and approximation of
  semigroups of nonlinear operators in {B}anach spaces}, J. Functional
  Analysis, 9 (1972), pp.~63--74.

\bibitem{bh}
{\sc M.~R. Bridson and A.~Haefliger}, {\em Metric spaces of non-positive
  curvature}, vol.~319 of Grundlehren der Mathematischen Wissenschaften
  [Fundamental Principles of Mathematical Sciences], Springer-Verlag, Berlin,
  1999.

\bibitem{chernoff}
{\sc P.~R. Chernoff}, {\em Note on product formulas for operator semigroups},
  J. Functional Analysis, 2 (1968), pp.~238--242.

\bibitem{chernoff-book}
\leavevmode\vrule height 2pt depth -1.6pt width 23pt, {\em Product formulas,
  nonlinear semigroups, and addition of unbounded operators}, American
  Mathematical Society, Providence, R. I., 1974.
\newblock Memoirs of the American Mathematical Society, No. 140.

\bibitem{en}
{\sc K.-J. Engel and R.~Nagel}, {\em A short course on operator semigroups},
  Universitext, Springer, New York, 2006.

\bibitem{gelander}
{\sc T.~Gelander, A.~Karlsson, and G.~A. Margulis}, {\em Superrigidity,
  generalized harmonic maps and uniformly convex spaces}, Geom. Funct. Anal.,
  17 (2008), pp.~1524--1550.

\bibitem{goebelreich}
{\sc K.~Goebel and S.~Reich}, {\em Uniform convexity, hyperbolic geometry, and
  nonexpansive mappings}, vol.~83 of Monographs and Textbooks in Pure and
  Applied Mathematics, Marcel Dekker Inc., New York, 1984.

\bibitem{grohs}
{\sc P.~Grohs}, {\em Wolfowitz's theorem and convergence of consensus
  algorithms in {H}adamard spaces}, preprint, available at
  http://www.math.ethz.ch/~pgrohs/research.html,  (2012).

\bibitem{jost94}
{\sc J.~Jost}, {\em Equilibrium maps between metric spaces}, Calc. Var. Partial
  Differential Equations, 2 (1994), pp.~173--204.

\bibitem{jost95}
\leavevmode\vrule height 2pt depth -1.6pt width 23pt, {\em Convex functionals
  and generalized harmonic maps into spaces of nonpositive curvature}, Comment.
  Math. Helv., 70 (1995), pp.~659--673.

\bibitem{jost97}
\leavevmode\vrule height 2pt depth -1.6pt width 23pt, {\em Generalized
  {D}irichlet forms and harmonic maps}, Calc. Var. Partial Differential
  Equations, 5 (1997), pp.~1--19.

\bibitem{jost2}
\leavevmode\vrule height 2pt depth -1.6pt width 23pt, {\em Nonpositive
  curvature: geometric and analytic aspects}, Lectures in Mathematics ETH
  Z\"urich, Birkh\"auser Verlag, Basel, 1997.

\bibitem{jost-ch}
\leavevmode\vrule height 2pt depth -1.6pt width 23pt, {\em Nonlinear
  {D}irichlet forms}, in New directions in {D}irichlet forms, vol.~8 of AMS/IP
  Stud. Adv. Math., Amer. Math. Soc., Providence, RI, 1998, pp.~1--47.

\bibitem{kato}
{\sc T.~Kato}, {\em Nonlinear semigroups and evolution equations}, J. Math.
  Soc. Japan, 19 (1967), pp.~508--520.

\bibitem{kop}
{\sc E.~Kopeck{\'a} and S.~Reich}, {\em Asymptotic behavior of resolvents of
  coaccretive operators in the {H}ilbert ball}, Nonlinear Anal., 70 (2009),
  pp.~3187--3194.

\bibitem{kr2010}
\leavevmode\vrule height 2pt depth -1.6pt width 23pt, {\em A mean ergodic
  theorem for nonlinear semigroups on the {H}ilbert ball}, J. Nonlinear Convex
  Anal., 11 (2010), pp.~185--197.

\bibitem{ks}
{\sc N.~J. Korevaar and R.~M. Schoen}, {\em Sobolev spaces and harmonic maps
  for metric space targets}, Comm. Anal. Geom., 1 (1993), pp.~561--659.

\bibitem{japan}
{\sc K.~Kuwae and T.~Shioya}, {\em Variational convergence over metric spaces},
  Trans. Amer. Math. Soc., 360 (2008), pp.~35--75.

\bibitem{sevilla}
{\sc C.~Li, G.~L{\'o}pez, and V.~Mart{\'{\i}}n-M{\'a}rquez}, {\em Monotone
  vector fields and the proximal point algorithm on {H}adamard manifolds}, J.
  Lond. Math. Soc. (2), 79 (2009), pp.~663--683.

\bibitem{sevilla2}
{\sc C.~Li, G.~L{\'o}pez, V.~Mart{\'{\i}}n-M{\'a}rquez, and J.-H. Wang}, {\em
  Resolvents of set-valued monotone vector fields in {H}adamard manifolds},
  Set-Valued Var. Anal., 19 (2011), pp.~361--383.

\bibitem{mayer}
{\sc U.~F. Mayer}, {\em Gradient flows on nonpositively curved metric spaces
  and harmonic maps}, Comm. Anal. Geom., 6 (1998), pp.~199--253.

\bibitem{miller}
{\sc E.~Miller, M.~Owen, and S.~Provan}, {\em Averaging metric phylogenetic
  trees}, preprint, arXiv:1211.7046v1,  (2012).

\bibitem{miyadera}
{\sc I.~Miyadera and S.~{\^O}haru}, {\em Approximation of semi-groups of
  nonlinear operators}, T\^ohoku Math. J. (2), 22 (1970), pp.~24--47.

\bibitem{papa}
{\sc E.~A. Papa~Quiroz and P.~R. Oliveira}, {\em Proximal point methods for
  quasiconvex and convex functions with {B}regman distances on {H}adamard
  manifolds}, J. Convex Anal., 16 (2009), pp.~49--69.

\bibitem{pazy}
{\sc A.~Pazy}, {\em Semigroups of linear operators and applications to partial
  differential equations}, vol.~44 of Applied Mathematical Sciences,
  Springer-Verlag, New York, 1983.

\bibitem{reich82}
{\sc S.~Reich}, {\em A complement to {T}rotter's product formula for nonlinear
  semigroups generated by the subdifferentials of convex functionals}, Proc.
  Japan Acad. Ser. A Math. Sci., 58 (1982), pp.~193--195.

\bibitem{reich83}
\leavevmode\vrule height 2pt depth -1.6pt width 23pt, {\em Solutions of two
  problems of {H}. {B}r\'ezis}, J. Math. Anal. Appl., 95 (1983), pp.~243--250.

\bibitem{reichshafrir}
{\sc S.~Reich and I.~Shafrir}, {\em Nonexpansive iterations in hyperbolic
  spaces}, Nonlinear Anal., 15 (1990), pp.~537--558.

\bibitem{reichshoikhet}
{\sc S.~Reich and D.~Shoikhet}, {\em Semigroups and generators on convex
  domains with the hyperbolic metric}, Atti Accad. Naz. Lincei Cl. Sci. Fis.
  Mat. Natur. Rend. Lincei (9) Mat. Appl., 8 (1997), pp.~231--250.

\bibitem{stoj}
{\sc I.~Stojkovic}, {\em Approximation for convex functionals on non-positively
  curved spaces and the {T}rotter-{K}ato product formula}, Adv. Calc. Var., 5
  (2012), pp.~77--126.

\bibitem{sturm-markov1}
{\sc K.-T. Sturm}, {\em Nonlinear {M}arkov operators associated with symmetric
  {M}arkov kernels and energy minimizing maps between singular spaces}, Calc.
  Var. Partial Differential Equations, 12 (2001), pp.~317--357.

\bibitem{sturm-markov2}
\leavevmode\vrule height 2pt depth -1.6pt width 23pt, {\em Nonlinear {M}arkov
  operators, discrete heat flow, and harmonic maps between singular spaces},
  Potential Anal., 16 (2002), pp.~305--340.

\bibitem{sturm}
\leavevmode\vrule height 2pt depth -1.6pt width 23pt, {\em Probability measures
  on metric spaces of nonpositive curvature}, in Heat kernels and analysis on
  manifolds, graphs, and metric spaces ({P}aris, 2002), vol.~338 of Contemp.
  Math., Amer. Math. Soc., Providence, RI, 2003, pp.~357--390.

\bibitem{sturm-semigr}
\leavevmode\vrule height 2pt depth -1.6pt width 23pt, {\em A semigroup approach
  to harmonic maps}, Potential Anal., 23 (2005), pp.~225--277.

\bibitem{trotter}
{\sc H.~F. Trotter}, {\em Approximation of semi-groups of operators}, Pacific
  J. Math., 8 (1958), pp.~887--919.

\bibitem{trotter2}
\leavevmode\vrule height 2pt depth -1.6pt width 23pt, {\em On the product of
  semi-groups of operators}, Proc. Amer. Math. Soc., 10 (1959), pp.~545--551.

\end{thebibliography}

\end{document}